\newtheorem{theorem}{Theorem}[section]
\newtheorem{prop}[theorem]{Proposition}
\newtheorem{claim}[theorem]{Claim}
\newtheorem{fact}[theorem]{Fact}
\newtheorem{proposition}[theorem]{Proposition}
\newtheorem{lemma}[theorem]{Lemma}
\newtheorem{question}[theorem]{Question}
\newtheorem*{question*}{Question}
\theoremstyle{definition}
\newtheorem{definition}[theorem]{Definition}
\newtheorem{question+}[theorem]{Question}
\newtheorem{defn}[theorem]{Definition}
\theoremstyle{remark}
\newtheorem{rmk}[theorem]{Remark}
\newtheorem{remark}[theorem]{Remark}
\newcommand{\WI}{\widetilde{\cal I}}
\newcommand{\WR}{\widetilde{\cal R}}
\newcommand{\la}{\langle}
\newcommand{\ra}{\rangle}
\newcommand{\sub}{\subseteq}
\newcommand{\CI}{{\cal I}}
\newcommand{\fr}{\ensuremath{\textup{fr}}}
\newcommand{\cal}[1]{\ensuremath{\mathcal{#1}}}
\newcommand{\Lrarr}{\ensuremath{\Leftrightarrow}}
\newcommand{\res}{\ensuremath{\upharpoonright}}
\newcommand{\cl}[1]{\begin{overline}{#1}\end{overline}}
\newcommand{\ve}{\ensuremath{\varepsilon}}
\newcommand{\lam}{\ensuremath{\lambda}}
\newcommand{\es}{\ensuremath{\emptyset}}
\newcommand{\dom}{\ensuremath{\textup{dom}}}
\newcommand{\sm}{\setminus}
\newcommand{\Z}{\mathbb{Z}}
\newcommand{\N}{\mathbb{N}}
\newcommand{\R}{\mathbb{R}}
\title[On semibounded expansions of ordered groups]
{On semibounded expansions of ordered groups}
\begin{document}

\author {Pantelis  E. Eleftheriou}

\address{School of Mathematics, University of Leeds, Leeds LS2 9JT, United Kingdom}

\email{p.eleftheriou@leeds.ac.uk}

\author {Alex Savatovsky}

\address{Department of Mathematics, University of Haifa, Haifa, Israel}

\email{alex.savatovsky@uni-konstanz.de}

\thanks{The first author was partially supported by an EPSRC Early Career Fellowship (EP/V003291/1) and a Zukunftskolleg Research Fellowship (Konstanz). The second author was partially supported by the Israel Science Foundation (Grant 290/19).}

\subjclass[2010]{Primary 03C64,  Secondary 22B99}
\keywords{o-minimality, tame expansions, d-minimality, smooth functions}

\date{\today}
\begin{abstract} We explore \emph{semibounded} expansions of arbitrary ordered groups; namely, expansions that do not define a field on the whole universe. 
We show that if $\cal R=\la \R, <, +, \dots\ra$ is a semibounded o-minimal structure and $P\sub \R$ is a set satisfying certain tameness conditions, then $\la \cal R, P\ra$ remains semibounded. Examples include the cases when $\cal R=\la\R,<,+, (x\mapsto \lam x)_{\lam \in \R}, \cdot_{\res [0, 1]^2}\ra$, and $P= 2^\Z$ or $P$ is an iteration sequence. 
As an application, we obtain that smooth functions definable in such $\la \cal R, P\ra$ are definable in $\cal R$.

\end{abstract}
 \maketitle

\section{Introduction}

The work of this paper lies at the nexus of two different directions in model theory, both related to o-minimality, which  so far have developed independently. The first direction is that of o-minimal \emph{semibounded} structures, which are o-minimal structures that do not interpret a global field, and are obtained, for example, as proper \emph{reducts} of real closed fields. These structures were extensively studied in the 90s by Marker, Peterzil, Pillay \cite{mpp, pet-reals, pss}  and others, they relate to 
Zilber's dichotomy principle on definable groups and fields,
and have continued to develop in recent years \cite{ed-sbd, el-sbd, pet-sbd}.

The second direction is that of \emph{expansions} $\WR$ of o-minimal structures \cal R which are not o-minimal, yet preserve the tame geometric behavior on the class of all definable sets. This area is much richer,  originating to A. Robinson \cite{rob} and van den Dries \cite{VDD1, vdd-dense}, it has largely expanded in the last two decades by many authors, and  includes broad categories of structures, such as d-minimal  expansions of o-minimal structures and expansions  with o-minimal open core. Although in general $\cal R$ is only required to expand a linear order, it is often assumed to expand an ordered group or even a real closed field (and, in fact,  the real field $\overline \R$).

In recent work \cite{HW}, Hieronymi-Walsberg considered expansions of ordered groups and explored the dichotomy between defining or not a local field. In this paper, we consider expansions of ordered groups  and explore the dichotomy between defining or not a global field (in the latter case, call $\WR$ \emph{semibounded}). As an application, and building on the work from \cite{es}, we obtain that for certain semibounded expansions $\WR$ of o-minimal structures $\cal R$, such as $\WR=\la \cal R, 2^\Z\ra$ with $\cal R=\la  R,<,+, (x\mapsto \lam x)_{\lam \in \R}, \cdot_{\res [0,1]^2}\ra$, every definable smooth (that is, infinitely differentiable) function is already definable in \cal R.

We now collect some definitions and state our results. We assume familiarity with the basics of o-minimality, as they can be found, for example, in \cite{VDD1}. A standard reference for semibounded o-minimal structures is \cite{ed-sbd}. The following definition extends the usual notion of a semibounded structure to a general (not necessarily o-minimal) setting.


\begin{defn}\label{def-sbd}
Let $\cal M = \la M, <, +, \dots\ra$ be an expansion of an ordered group. We call $\cal M$ \emph{semibounded} if there is no definable ordered field with domain $M$ whose order agrees with $<$.  
\end{defn}

There is  a number of statements that could be adopted as definitions of a semibounded structure \cal M and which are known to be equivalent in the o-minimal setting  (see  \cite[Fact 1.6]{ed-sbd}). For example, one could require that there are no definable \emph{poles} (that is, definable bijections between bounded and unbounded sets), or that \cal M is an expansion of $\la M, <, +\ra$ by bounded sets.
The latter statement is in fact  the key definition in  \cite{bel}. The equivalence of (suitable versions of) these statements  in a general setting appears to be an open question. 
 Our choice of Definition \ref{def-sbd} in the current setting is due to the fact that it provides a priori the weakest notion (see relevant questions in Section \ref{sec-sbdqns} below). The main focus of the current work is to establish in this setting our results \ref{main1} - \ref{main2} below. 



In \cite{es}, we introduced certain tameness properties for expansions of real closed fields, and here we extend them to the setting of expansions of ordered groups (Definitions \ref{def-DP}-\ref{def-dim}). Recall that an ordered structure \cal R is called \emph{definably complete} if every bounded definable subset of its universe has a supremum.
For any set $X\sub R^n$, we define its \emph{dimension} as the maximum $k$ such that some projection of $X$ to $k$ coordinates has non-empty interior.\\

\textbf{For the rest of this paper, and unless stated otherwise, we fix an o-minimal expansion $\cal R=\la R, <, +, \dots\ra$  of an ordered group, and a definably complete expansion
$\WR=\la \cal R, \dots\ra$ of $\cal R$. By $\cal L$ we denote the language of $\cal R$. By `definable' (respectively, \cal L-definable), we mean definable in $\WR$ (respectively, in \cal R), with parameters. By $P$ we denote a subset of $R$ of dimension $0$.}\\

If \cal R is a real closed field, we call an \cal L-definable set \emph{semialgebraic}. \smallskip

%
%
%

We  fix throughout the paper the following structures over the reals:
\begin{itemize}
  \item $\overline \R=\la \R, <, +, \cdot\ra$, the real  ordered field.
  \item $\R_{vec}=\la \R, <, +, (x\mapsto \lambda x)_{\lambda\in \R}\ra$, the real  ordered vector space over $\R$, and
  \item $\R_{sbd}=\la \R, <, +, (x\mapsto \lambda x)_{\lambda\in \R}, \cdot_{\res [0,1]^2}\ra$, a  semibounded structure.
\end{itemize}
We note that, by \cite{mpp}, $\R_{sbd}$ is the unique structure that lies strictly between $\R_{vec}$ and $\overline \R$ (in terms of their classes of definable sets).




We let $\Lambda(\cal R)$ be the set of all partial $\es$-definable  endomorphisms of $\la R, <, +\ra$. Then \cal R is called \emph{nonlinear} (\cite{lp})
 if it properly expands $\la R, <, +, \Lambda(\cal R)\ra$. By \cite{pest-tri}, $\cal R$ is nonlinear if and only if it defines a real closed field on some bounded interval.\smallskip

We now extend the tameness properties from \cite{es} to the current setting.

\begin{defn}\label{def-chunk}
Let $Y\sub X\sub R^n$ be two sets. We say that $Y$ is an \emph{\cal L-chunk of $X$} if it is an \cal L-definable cell, $\dim Y= \dim X$, and for every $y\in Y$, there is an open box $B\sub R^n$ containing $y$ such that $B\cap X\sub Y$. Equivalently, $Y$ is a relatively open $\cal L$-definable cell contained in $X$ with $\dim Y=\dim X$.
\end{defn}


\begin{defn}\label{def-DP} We say that $\WR$ has the \emph{decomposition property} \textup{(DP)} if for every definable set $X\sub \R^n$,
\begin{enumerate}
  \item[(I)] there is an \cal L-definable family $\{Y_t\}_{t\in R^m}$ of subsets of $R^n$, and a definable set $S\sub R^m$ with $\dim S=0$, such that $X=\bigcup_{t\in S} Y_t$,

  \item[(II)] $X$ contains an \cal L-chunk.
\end{enumerate}
\end{defn}

\begin{defn}\label{def-dim}
We say that $\WR$ has the \emph{dimension property} (DIM) if for every \cal L-definable family $\{X_t\}_{t\in A}$, and definable set $S\subseteq A$ with $\dim S=0$, we have
$$\dim \bigcup_{t\in S} X_t=\max_{t\in S} \dim X_t.$$
\end{defn}



  %

As mentioned earlier, (DP) and (DIM) extend the corresponding properties from \cite{es} to the current setting.\footnote{We take the opportunity to correct two misprints in \cite{es}.  First, in \cite[Definition 1.1]{es},  the phrase `$\cal L$-definable' should be replaced by `semialgebraic' (as the notation $\cal L$ was not defined there). Second, \cite[Definition 1.3]{es} should be the same with Definition \ref{def-dim} here, with $\cal L$ being the language of $\overline \R$.   This is how those definitions are used  in the rest of that paper.}
In \cite{es}, we showed that if \cal R is a real closed field and $\WR$ satisfies (DP) and (DIM), then $\WR$ defines no new smooth functions.
We extend this theorem to the semibounded setting over the reals (Theorem \ref{main2} below), in two steps. First, in Section \ref{sec-sa}, we prove the following result which holds without the assumption of $\WR$ being over the reals. It ensures that $\WR$ defines no new smooth functions that are not semialgebraic.

\begin{theorem}\label{main1}
Let $\cal R$ be a nonlinear reduct of a real closed field, and  $\WR$ an expansion of $\cal R$ satisfying \textup{(DP)} and \textup{(DIM)}. Let $f: X\sub R^n\to R$ be a definable smooth function, with  open semialgebraic domain $X$. Then $f$ is semialgebraic.
\end{theorem}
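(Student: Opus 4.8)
The plan is to show that a definable smooth function $f$ with semialgebraic open domain $X$ is semialgebraic by exploiting (DP) and (DIM) together with smoothness to "recover" $f$ from a single $\cal R$-definable chunk of its graph. First I would apply (DP)(I) to the graph $\Gamma(f) \sub R^{n+1}$ to write it as $\Gamma(f) = \bigcup_{t\in S} Y_t$ for an $\cal R$-definable family $\{Y_t\}$ and a zero-dimensional definable $S$. Since $\dim \Gamma(f) = n = \dim X$, by (DIM) some $Y_{t_0}$ has dimension $n$; shrinking, I may assume $Y_{t_0}$ is an $\cal R$-definable cell of dimension $n$ on which (after further shrinking to where the projection to the first $n$ coordinates is a homeomorphism onto an open set) $Y_{t_0}$ is the graph of an $\cal R$-definable function $g$ on an open semialgebraic set $U \sub X$. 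Because $Y_{t_0} \sub \Gamma(f)$, we have $g = f\res U$, so $f$ agrees with an $\cal R$-definable smooth function on the nonempty open set $U$.

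The heart of the argument is then an analytic-continuation / rigidity step: I want to upgrade "$f$ agrees with a semialgebraic function on some open subset of $X$" to "$f$ is semialgebraic on all of $X$". The idea is to consider the set $V$ of points $x\in X$ having an open neighborhood on which $f$ coincides with a semialgebraic function; $V$ is open, nonempty, and definable (in $\WR$). I would argue $V$ is also closed in $X$: if $x \in \overline V \cap X$, then near $x$ the function $f$ is smooth, and on the dense open $V$-part of a small ball around $x$ it is piecewise semialgebraic; using smoothness (all derivatives of $f$ at $x$ exist and are limits of derivatives on $V$) together with the o-minimality of $\cal R$ — so that a semialgebraic germ is determined by finitely much data — one shows the germ of $f$ at $x$ is again semialgebraic, i.e. $x \in V$. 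Since $X$ is semialgebraic hence has finitely many semialgebraic connected components, each of which is definably connected in $\cal R$ and a fortiori in $\WR$, the clopen nonempty $V$ must be all of $X$. Thus $f$ is locally semialgebraic, and a standard compactness/finiteness argument (again using that $X$, being semialgebraic, is a finite union of cells and that "the germ of $f$ at $x$ is the germ of the semialgebraic function $h_i$" is a definable condition picking out finitely many $h_i$) glues these local representations into a single semialgebraic function equal to $f$.

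The main obstacle I anticipate is the closedness of $V$, i.e. the local analytic-continuation step at boundary points of $V$ inside $X$. One has to make precise why smoothness of $f$ forces the germ at a limit point of $V$ to remain semialgebraic; the natural route is to fix a point $x_0\in \overline V\cap X$, choose a small box $B$ around $x_0$ contained in $X$, apply (DP) to $f\res B$ to get a chunk, and show this chunk must be the graph of a semialgebraic function whose germ at $x_0$ agrees with that of $f$ — this uses that on $B\cap V$, $f$ already equals a semialgebraic function and that two smooth functions on a box agreeing on a set with nonempty interior and having "matching" semialgebraic behaviour must agree. Here I expect to lean on (DIM) to locate the $n$-dimensional chunk and on the o-minimality of $\cal R$ to control how many distinct semialgebraic germs can appear. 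A secondary technical point is ensuring all the sets involved ($V$, the locus where a given $\cal R$-definable chunk is a function-graph, the parameter sets) are genuinely definable in $\WR$ so that one may legitimately invoke definable (connectedness/finiteness) properties; this should follow routinely since $\{Y_t\}$ and $X$ are $\cal R$-definable and $S$ is definable, but it needs to be checked carefully.
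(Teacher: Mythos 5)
Your opening move (extracting from \textup{(DP)} an $\cal R$-definable, hence semialgebraic, chunk of $\Gamma_f$ and concluding that $f$ agrees with a semialgebraic function on some nonempty open $U\sub X$) is sound and matches the role chunks play in the paper. The gap is in the continuation step, and it is fatal as written. You propose to show that the set $V$ of points where $f$ is locally semialgebraic is closed in $X$ by arguing that (i) two smooth functions on a box that agree on a set with nonempty interior ``must agree,'' and (ii) since all derivatives of $f$ at a limit point $x_0$ of $V$ are limits of derivatives on $V$, the germ of $f$ at $x_0$ is semialgebraic. Both claims are false for merely smooth functions: a flat perturbation such as $g(y)+e^{-1/|y-x_0|^2}$ is smooth, agrees with the semialgebraic $g$ to infinite order at $x_0$ (and can be made to agree with it on an open set), yet is not semialgebraic near $x_0$. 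Smoothness gives no analytic continuation, so knowing the Taylor data at $x_0$, or agreement on an open subset, does not propagate semialgebraicity. Your fallback --- apply \textup{(DP)} to $f\res B$ near $x_0$ to get a chunk --- does not close the argument either, because you cannot control where that chunk sits: it may land entirely inside $V\cap B$ and give no new information at $x_0$.

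The paper avoids this by a two-step argument with genuinely different mechanisms. Step I proves the \emph{bounded} case in full: after rescaling $\Gamma_f$ into a power of a short interval $I$, it transports the problem to the real closed field $\cal I$ induced on $I$, checks that smoothness and \textup{(DP)}/\textup{(DIM)} transfer (Lemmas \ref{lem-Cinfty} and \ref{lem-dpdim}), and invokes the field-case theorem of \cite{es}; this is where most of the content lies and it yields that $f$ is semialgebraic on \emph{every} bounded box, not just on some open set. Step II then glues these local pieces using algebraic rigidity rather than analytic continuation: the Zariski closure of the graph of a Nash function on a connected domain is irreducible (Fact \ref{Coste3}), and two irreducible algebraic sets of dimension $n$ whose intersection has dimension $n$ coincide (Fact \ref{Coste2}), so all the boxwise Zariski closures agree and $\Gamma_f$ lies in a single $n$-dimensional algebraic set. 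Your proposal is missing both ingredients: the ``semialgebraic on every bounded box'' input, and a valid rigidity principle to replace the false continuation claim. To repair it you would essentially have to reproduce both steps of the paper's proof.
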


%

%

Second, in Section \ref{sec-Rdef}, we restrict \cal R over the reals and let $\WR=\la \cal R, P\ra$. Using a result from Friedman-Miller \cite{FM} (Fact \ref{FM} below), we prove the following proposition. Note that here $\cal R$ is any semibounded structure over the reals, not necessarily $\R_{sbd}$.


\begin{prop}\label{sharp} Let $\cal R= \la \R, <, +, \dots\ra$ be an o-minimal semibounded structure. Suppose $\WR= \la \cal R, P\ra$ has  \textup{(DIM)}. Then $\WR$ is semibounded. 
\end{prop}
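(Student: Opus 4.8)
The plan is to argue by contradiction: assume $\WR$ defines a field $F=\la\R,\oplus,\otimes\ra$ whose domain is all of $\R$, and deduce that $\cal R$ itself defines a field on $\R$, contradicting the hypothesis that $\cal R$ is semibounded. After a standard reduction we may assume $F$ is real closed, so its order $\prec$ is definable in $\WR$ (via ``$b\ominus a$ is a nonzero $\otimes$-square''). The heart of the matter will be to show that the field topology of $F$ — the order topology of $\prec$ — coincides with the order topology of the ambient $<$. Granting this, the graphs of $\oplus$ and of $\otimes$ are closed subsets of $\R^3$ of dimension $2$, hence have empty interior, so their complements are open sets definable in $\WR$; by Fact \ref{FM}, which applies since $\WR=\la\cal R,P\ra$ satisfies (DP) and (DIM), these complements — and therefore the graphs themselves — are $\cal R$-definable. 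Thus $\cal R$ defines $F$, the desired contradiction.

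To establish the topological compatibility I would proceed in two steps. First I use (DP): the graph of $\otimes$ is a definable subset of $\R^3$ of dimension $2$, so by clause (II) of (DP) it contains an $\cal R$-definable chunk; since this chunk lies inside a function graph it is the graph of $\otimes$ over an $\cal R$-definable set of dimension $2$, so after shrinking, $\otimes$ agrees with an $\cal R$-definable function $g$ on an $<$-open box $B\sub\R^2$, and $g$ — hence $\otimes\res B$ — is $<$-continuous by o-minimality of $\cal R$; likewise for $\oplus$. Second I spread this local continuity to all of $\R^2$ using the field axioms of $F$ and the definable completeness of $\WR$: every point of $\R^2$ can be carried into $B$ by finitely many $F$-translations and $F$-scalings, which are themselves locally $<$-continuous by the first step, and definable completeness allows one to propagate $<$-continuity globally. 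Here semiboundedness of $\cal R$ enters essentially: since $\cal R$ is o-minimal and semibounded it has no definable poles and its definable unary functions have tame growth at $\pm\infty$, which is what forbids $\prec$ and $<$ from inducing genuinely different topologies on $\R$.

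I expect this local-to-global passage — equivalently, the reconciliation of the two orders $\prec$ and $<$ on $\R$ — to be the main obstacle; the rest is a direct application of (DP) or a black-box use of Fact \ref{FM}. An alternative route, which may cut down on the bookkeeping, is to first extract from $F$ a definable \emph{pole}, that is, a definable bijection between a $<$-bounded and a $<$-unbounded subset of $\R$, and then analyze its graph via (DP), (DIM), and Fact \ref{FM}, using that an $\cal R$-definable function with bounded domain has bounded image; the essential difficulty, controlling the interaction of the two orders, is the same.
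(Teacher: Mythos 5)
Your strategy differs from the paper's, and as written it has two genuine gaps. The first is the step where you conclude that the complements of the graphs of $\oplus$ and $\otimes$, being open and definable, are $\cal R$-definable ``by Fact \ref{FM}''. That fact does not say this: it only says that a definable set all of whose fibers have empty interior is fiberwise contained in the closure of the image of $P^m$ under an $\cal R$-definable map. It is not an open-core statement. An assertion of the strength you need (every open $\WR$-definable set is $\cal R$-definable) is what Friedman--Miller prove for expansions of the real \emph{field}; here $\cal R$ is only a semibounded ordered group, and the paper deliberately quotes only the ``last claim in the proof of Theorem A'' because the rest is not available in this setting. The second gap is that the topological compatibility of $\prec$ and $<$ --- which you rightly call the heart of the matter --- is asserted rather than proved. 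The translations $z\mapsto z\oplus a$ and scalings $z\mapsto z\otimes c$ you use to spread local continuity are definable only in $\WR$, not in $\cal R$, so semiboundedness and tame growth of $\cal R$-definable functions give no control over them; ``definable completeness allows one to propagate continuity globally'' is not an argument, and the claim that the graph of $\otimes$ is closed already presupposes the continuity you are trying to establish. In a non-o-minimal expansion there is no analogue of the o-minimal theorem that definable groups are topological groups, so this local-to-global passage cannot be taken for granted.

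For comparison, the paper proves the stronger Proposition \ref{no mult}: $\WR^\#$ admits no definable pole, which implies semiboundedness. A pole yields a definable unbounded set $S$ of dimension $0$; one applies Fact \ref{FM} to the dilation family $\{xS\}_{x\in\R}$ (legitimate because (DIM) makes $P$ sparse, Lemma \ref{sparse}), then the structure theorem Fact \ref{str function}, which says $\cal R$-definable functions are affine up to a bounded error, to produce a $0$-dimensional definable set of quotients whose closure is all of $\R$ --- contradicting Lemma \ref{lem-dim2}. Your closing alternative (``extract a pole and analyze it'') is indeed the route the paper takes, but extracting a pole from the hypothetical field $F$ again requires reconciling $\prec$ with $<$, so it does not let you bypass the difficulty; the paper avoids it entirely by starting from a pole rather than from a field.
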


We can then conclude our main result.

\begin{theorem}\label{main2} Let $\cal R=\R_{sbd}$, and  assume that $\WR=\la \cal R, P\ra$ satisfies \textup{(DP)} and \textup{(DIM)}. Let $f: X\sub \R^n\to \R$ be a definable smooth function, with open $\cal L$-definable  domain $X$. Then $f$ is \cal L-definable.
\end{theorem}
\begin{proof}
By Theorem \ref{main1}, $f$ is semialgebraic. By Proposition \ref{sharp}, $\WR$ is semibounded. In particular, its reduct $\la \cal R, f\ra$ is semibounded. But this reduct is o-minimal, and hence, by \cite[Theorem 1.4]{pet-reals}, $f$ is definable in $\R_{sbd}$.
\end{proof}

\begin{remark} \label{rmk1}$ $
 The following assumptions of Theorem \ref{main2} are necessary:
    \begin{enumerate}
    \item   $\cal R=\R_{sbd}$ (and not any semibounded structure over the reals).  Indeed, let $\cal R$ be the expansion of $\la \R, <, +\ra$ with all restricted analytic functions, and $\WR=\la \cal R, e^{2\pi\Z}\ra$. Similarly to \cite[Example 4.7]{es},  let $f:(0,1)\to \R$ be the function $f(x)= \sin \log (1/x)$. Clearly, $f$ is not definable in the o-minimal \cal R, since its zero set is an infinite discrete set. We show that $f$ is definable in $\WR$. Let $\lambda:(0,1)\to e^{2\pi \Z}$ be the function sending $x$ to the biggest element of $e^{2\pi\Z}$ lower or equal than $x$. For every $x\in (0,1)$, we have
        $$ f(x) =\sin\log(1/\lam(x)\cdot \lam(x)/x)=\sin\log (\lam(x)/x).$$
        But $\lam(x)/x\in [e^{-2\pi},1]$, $\log ([e^{-2\pi},1 ])=[ -2\pi,0]$, and
        the map
        $$(t,x)\to t/x: \bigcup_{t'\in (0,1)} \{t'\} \times [t', e^{2\pi} t'] \to [ e^{-2\pi},1]$$ is definable in \cal R. Hence, $f$ definable in $\WR$.

  \item $f$ is smooth. If not, we can let $\WR=\la \R_{sbd}, 2^\Z\ra$ and  $f_n$ be the function defined in \cite[Remark 4.8(3)]{es}. Then ${f_n}_{\res (0,1)}$ is definable, $\cal C^{n-1}$, not $\cal C^n$.
  \end{enumerate}
\end{remark}

In Section \ref{sec-examples}, we turn to examples $\WR=\la \cal R, P\ra$ (Proposition \ref{prop-examples} below) to which we can apply Theorem \ref{main2}. The archetypical example is that of $\la \R_{sbd}, 2^\Z\ra$, but our work yields more examples. Let us recall a definition.




\begin{definition}[{\cite{MT}}] Let $f:\R\to \R$ be an \cal L-definable bijection, and $f^n$ the $n$-th compositional iterate of $f$. We say that \cal R is \emph{$f$-bounded} if for every \cal L-definable function $g : \R\to \R$, there is $n\in \N$ such that ultimately $g<f^n$.

Let $c\in\R$ and $f$ an \cal L-definable function such that \cal R is $f$-bounded, and such that $(f^n(c))_n$ is growing and unbounded. We call such $(f^n(c))_n$ an {\em iteration sequence}.
 \end{definition}

\begin{prop}\label{prop-examples}
Let $\cal R= \R_{sbd}$, and $\WR$ be any of the following structures:
\begin{enumerate}
  \item   $\la \cal R,P\ra$, where $P$ is an iteration sequence.
  \item $\la\cal R, \alpha^\Z, \cdot_{\res (\alpha^\Z)^2}\ra$, where $1<\alpha\in \R$.
\end{enumerate}
Then every smooth definable map with open $\cal L$-definable domain is $\cal L$-definable.
\end{prop}

We note that if we replaced $\R_{sbd}$ by $\R_{vec}$ in the above examples, the  conclusion of Theorem \ref{main2} also holds, by Hieronymi-Walsberg \cite{HW}.

%

$ $\\
\noindent\emph{Structure of the paper.}
In Section \ref{sec-prel}, we fix some notation and establish  basic properties for semibounded structures. In Section \ref{sec-sa}, we prove Theorem \ref{main1}.  In Section \ref{sec-Rdef}, we prove Proposition \ref{sharp}.  In Section \ref{sec-examples}, we prove Proposition \ref{prop-examples} and conclude with various open questions  about extending our results further.


$ $\\
\noindent\textbf{Acknowledgments.}
We thank   Philipp Hieronymi and Erik Walsberg for motivating some of the topics of this paper. 






\section{Preliminaries}\label{sec-prel}

In this section, we fix some notation and prove basic facts about semibounded structures.
If $A, B\sub R$, we denote $\frac{A}{B}=\{a/b : a\in A, b\in B\}$. If $t\in R$, we write $\frac{A}{t}$ for $\frac{A}{\{t\}}$.
By a $k$-cell, we mean a  cell of dimension $k$.  If $S\sub R^n$ is a set, its closure is denoted by $\cl S$, with sole exception  $\overline \R$, which denotes the real field. By an open box $B\subseteq R^n$, we mean a set of the form
$$B=(a_1, b_1)\times \ldots \times (a_n, b_n),$$
for some $a_i< b_i\in R\cup\{\pm\infty\}$. By an open set we always mean a non-empty open set.
For a set $X\sub \R$, we define the \emph{convex hull} of $X$, denoted by $conv(X)$, as the set
$$conv(X)=\bigcup_{x<y\in X} [x, y].$$

We prove a useful lemma about our properties.

\begin{lemma}\label{lem-dim2} Assume $\WR=\la \cal R, P\ra$ has \textup{(DP)(II)} and \textup{(DIM)}. Then for every definable set $X$,  $\dim(\cl{X})=\dim(X)$.
\end{lemma}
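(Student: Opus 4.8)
The plan is to prove, by induction on $n$, that every definable $X\subseteq R^n$ satisfies $\dim(\cl{X})=\dim(X)$. Since $X\subseteq\cl{X}$ and $\dim$ is monotone, only the inequality $\dim(\cl{X})\le\dim(X)$ needs proof; and if $\dim X=n$ then $X$, hence $\cl X$, has nonempty interior and both sides equal $n$. So I would set $d=\dim X$, assume $d<n$, assume towards a contradiction that $e:=\dim(\cl X)>d$, and use that $\cl X$ — and every definable subset of it — is again definable, so that (DP)(II) applies to all these sets. Two subcases occur: $e=n$ and $d<e<n$.

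Suppose first $e=n$. Then $\cl X$ contains an open box $B_0$, and since $X$ is dense in $\cl X$ the set $X\cap B_0$ is dense in $B_0$, with $\dim(X\cap B_0)\le d<n$. Applying (DP)(II) to $X\cap B_0$ produces an $\cal R$-definable chunk $W\subseteq X\cap B_0$, that is, an $\cal R$-definable cell with $\dim W=\dim(X\cap B_0)<n$ which is relatively open in $X\cap B_0$. Fixing $w\in W\subseteq B_0$ and shrinking a witnessing box to an open box $B$ with $w\in B\subseteq B_0$, I obtain $B\cap X=B\cap(X\cap B_0)\subseteq W$; since $X\cap B$ is dense in $B$, this forces $B\subseteq\cl{W}$, so $\dim(\cl{W})=n$. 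But $W$ is $\cal R$-definable and $\cal R$ is o-minimal, so $\dim W=\dim(\cl{W})=n$ — a contradiction.

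Suppose now $d<e<n$. Applying (DP)(II) to $\cl X$ gives an $\cal R$-definable chunk $Y$: an $\cal R$-definable $e$-cell that is relatively open in $\cl X$. I would first check that $X\cap Y$ is dense in $Y$: given $y\in Y$, choose an open box $B\ni y$ with $B\cap\cl X\subseteq Y$; then $B\cap X\subseteq Y$, and since $y\in\cl X$, every open box around $y$ inside $B$ meets $X$, hence meets $X\cap Y$. Next, since $Y$ is an $e$-dimensional cell of the o-minimal structure $\cal R$, the coordinate projection $\pi\colon R^n\to R^e$ onto a suitable choice of $e$ of the coordinates restricts to a homeomorphism of $Y$ onto an open set $U\subseteq R^e$. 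Put $X':=\pi(X\cap Y)\subseteq U$. This is definable, and since $\pi\res Y$ is a homeomorphism taking $X\cap Y$ to $X'$, density of $X\cap Y$ in $Y$ yields density of $X'$ in $U$; hence $U\subseteq\cl{X'}$ and $\dim(\cl{X'})\ge\dim U=e$. On the other hand a coordinate projection cannot increase dimension, so $\dim X'\le\dim(X\cap Y)\le\dim X=d<e$. Thus $X'\subseteq R^e$, with $e<n$, contradicts the inductive hypothesis, completing the induction.

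The only genuinely new step is this last descent, and the point to be careful about is that $X\cap Y$ is merely definable, not $\cal R$-definable, so its good behaviour must be extracted from (DP)(II) rather than from o-minimality of $\cal R$; the supporting facts — that an $e$-dimensional cell of $\cal R$ is mapped homeomorphically onto an open subset of $R^e$ by a suitable coordinate projection, that $\dim(\cl{W})=\dim W$ for $\cal R$-definable $W$, and that coordinate projections do not increase our notion of dimension — are all standard in o-minimality. I note that the argument uses only (DP)(II); (DIM) does not seem to be needed here.
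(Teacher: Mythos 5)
Your proof is correct, and its engine is exactly the paper's: apply \textup{(DP)(II)} inside an open box contained in $\cl{X}$, observe that the resulting $\cal R$-definable chunk $W$ has dimension $<n$ yet absorbs $X\cap B$ for some sub-box $B$ on which $X$ is dense, and contradict the o-minimal fact that $\dim(\cl{W})=\dim W$ for $\cal R$-definable $W$; like you, the paper uses only \textup{(DP)(II)} and never \textup{(DIM)}. Where you genuinely diverge is in reducing to the case $\dim(\cl{X})=n$. The paper does this in one line at the outset: project $X$ to the $e=\dim(\cl{X})$ coordinates on which $\cl{X}$ has nonempty interior; since coordinate projections do not increase this notion of dimension and $\pi(\cl{X})\subseteq\cl{\pi(X)}$, the set $\pi(X)\subseteq R^e$ already has closure of full dimension $e$ while $\dim\pi(X)<e$, and the main case applies directly. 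Your treatment of $d<e<n$ instead sets up an induction on $n$, applies \textup{(DP)(II)} a second time to $\cl{X}$ itself, proves density of $X\cap Y$ in the chunk $Y$, and only then invokes the homeomorphic coordinate projection of an $e$-cell to descend to $R^e$. Both routes are valid, but the projection you use at the very end of your second case could have been applied to $X$ at the very beginning, which collapses the induction, the density argument, and the second use of \textup{(DP)(II)} into the paper's single reduction step.
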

\begin{proof}
Let $X\subseteq R^n$ be a definable set. Towards a contradiction,  assume that $\dim(X)<\dim(\cl{X})$. By projecting onto some coordinates we may assume that $\dim(\cl{X})=n$. Let $B$ be an open box  contained in $\cl{X}$.  By DP(II), there is an $\cal L$-chunk $Y$ of $X\cap B$. Thus $\dim Y=\dim X\cap B<n$. By definition of an $\cal L$-chunk, there is an open box $B'\subseteq B$ such that $B'\cap X\subseteq Y$. Since $Y$ is \cal L-definable and has dimension $k'<n$, $Y$ is not dense in $B'$. Therefore, $X$ is not dense in $B'$ and thus neither in $B$. Since $B$ is open, $B\nsubseteq \cl{X}$. This is a contradiction and we have the result.
\end{proof}
\begin{rmk} Based on \cite{Mil1}, (DPII) is not necessary to get the conclusion of Lemma \ref{lem-dim2}; see Remark \ref{rmk i-min}.
\end{rmk}

\subsection{Semibounded o-minimal structures}\label{sec-sbd}

In this subsection, we assume that $\cal R$ is a semibounded structure.
Following \cite{pet-sbd, el-sbd}, we say that an interval is \emph{short} if it is possible to define a  field structure on it.   We say that a set is \emph{short} if it is  contained in a product of short intervals. 

\begin{definition}
Let $\cal Y=\{X_t\,:\; t\in A\}$ be an \cal L-definable family. We define the equivalence relation $\sim_{\cal Y}$ as follows:
$$t\sim_{\cal Y} t' \,\,\Lrarr\,\, X_t=X_{t'}.$$
\end{definition}
\begin{lemma} \label{lm com} Let $I\sub M^n$ be a short set, and $\cal Y=\{X_t\}_{t\in A}$ an \cal L-definable family of subsets of $I$.  Then there is a short set $A'\sub A$ of representatives for $\sim_{\cal Y}$.
\end{lemma}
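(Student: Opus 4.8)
The plan is to exploit that $\cal R$ behaves on short sets like an o-minimal expansion of a real closed field, where elimination of imaginaries supplies canonical codes for the sets $X_t$ inside a short set, and then to pull a transversal back into $A$ by definable choice. For the setup: since $I$ is short, it lies in a product $J_1\times\cdots\times J_n$ of short intervals, each carrying an $\cal R$-definable real closed field structure, and replacing $I$ by this box — which only enlarges the $X_t$ and changes neither $\sim_{\cal Y}$ nor the meaning of a short set of representatives — I may assume that $I$ is a short box $B$ on which $\cal R$ defines, via these structures, a real closed field $K$, with every $X_t$ a definable subset of $B$; the structure induced by $\cal R$ on $B$, together with the operations of $K$, is then an o-minimal expansion of the real closed field $K$. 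The goal is to produce a short set $A'\sub A$ meeting every $\sim_{\cal Y}$-class in exactly one point.

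The heart of the matter is a coding step. Since $\{X_t\}_{t\in A}$ is given by a single formula, the $X_t$ have uniformly bounded complexity, and each is a definable subset of $B$. Hence, applying elimination of imaginaries inside the o-minimal expansion of $K$ just described, one obtains an $\cal R$-definable function $c$ on $A$, with values in a fixed power of $K$ and therefore in a short set $C$, such that
$$c(t)=c(t')\quad\Longleftrightarrow\quad X_t=X_{t'}\qquad\text{for all }t,t'\in A.$$
Concretely: fix an $\cal R$-definable family enumerating without repetition all subsets of $B$ that are unions of at most $N$ cells, where $N$ is the uniform bound coming from a cell decomposition of the family $\{X_t\}$; each $X_t$ occurs in this enumeration, and $c(t)$ is its unique index, so $c$ is definable. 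I expect this step — forcing the codes to land in a \textbf{short} set, which is precisely where the hypothesis that $I$ is short enters — to be the point demanding the most care.

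To conclude, put $C_0=c(A)\sub C$, which is short as a subset of a short set. By definable choice in $\cal R$ (an expansion of an ordered group) there is an $\cal R$-definable section $\gamma\colon C_0\to A$ with $c\circ\gamma=\mathrm{id}_{C_0}$; note that $\gamma$ is injective, since the fibres of $c$ are pairwise disjoint. Because the $\cal R$-definable image of a short set in a semibounded o-minimal structure is again short (see \cite{ed-sbd}), the set $A':=\gamma(C_0)\sub A$ is short. It then remains to check that $A'$ is a set of representatives for $\sim_{\cal Y}$: given $t\in A$, the point $\gamma(c(t))$ lies in $A'$ and $c(\gamma(c(t)))=c(t)$, so $X_{\gamma(c(t))}=X_t$; and if $t_1,t_2\in A'$ satisfy $X_{t_1}=X_{t_2}$, writing $t_i=\gamma(u_i)$ gives $u_i=c(\gamma(u_i))=c(t_i)$, hence $u_1=c(t_1)=c(t_2)=u_2$, and so $t_1=\gamma(u_1)=\gamma(u_2)=t_2$.
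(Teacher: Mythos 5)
There is a genuine gap at the coding step, which you yourself flag as the delicate point. Elimination of imaginaries in ``the o-minimal expansion of $K$'' applies to families that are definable \emph{in that induced structure}, i.e.\ parametrized by a subset of some power of $B$; your family $\{X_t\}_{t\in A}$ is parametrized by $A$, which lives outside $B$ and need not be short, and the assertion that it can be uniformly re-presented with parameters from $B$ is essentially the statement of the lemma itself -- so this reading is circular. Applying EI (or definable choice) in $\cal R$ instead produces codes in $M^k$, with no reason for them to land in a short set; getting them into a short set is the entire difficulty. Your concrete fallback -- ``an $\cal R$-definable family enumerating without repetition all subsets of $B$ that are unions of at most $N$ cells'' -- is not a definable family once $n>1$: cells in $B^n$ are bounded by graphs of arbitrary definable functions $B^{n-1}\to B$, and coding such a function is again an instance of the problem one dimension down. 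Only for $n=1$ does the enumeration exist, because a union of at most $N$ points and intervals is determined by finitely many endpoints in $\cl{B}$, which is short.

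The paper's proof is exactly the induction your fallback is missing: the base case $n=1$ maps each $X_t$ to its endpoint data in $I$ or $I^2$ and invokes the dimension theory of \cite{el-sbd} to extract a short $A'\sub A$ with the same image; the inductive step handles $n>1$ by first treating the family of projections $\{\pi(X_t)\}$ and then the families of fibers over each point of a fixed projection, gluing the resulting short sets with \cite[Lemma 3.6]{el-sbd} (a union of short sets indexed by a short set is short). Your final step -- pulling a transversal back along a section and using that definable images of short sets are short -- is fine, but it rests on a coding function $c$ that your argument does not actually construct in dimension greater than one.
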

\begin{proof}

We prove the lemma by induction on $n$.  Let $n=1$. By cell decomposition in o-minimal structures, it is easy to see that we may assume that either every $X_t$ is a singleton or every $X_t$ is an open interval.
Suppose every $X_t$ is a singleton, $X_t=\{x_t\}$. Define the map $f:A\to I$, with $t\mapsto x_t$. By the usual dimension properties, as in \cite{el-sbd}, there is a short $A'\sub A$, such that $f(A')=f(A)$, as needed. Suppose now that every $X_t$ is an open interval, $X_t=(a_t, b_t)$. Define $f:A\to I^2$, with $t\mapsto (a_t, b_t)$. Again, there is a short $A'\sub A$, such that $f(A')=f(A)$, as needed. 

Now let $n>1$.
By inductive hypothesis for $\cal C=\{\pi(X_t)\}_{t\in A}$, there is a short set of representatives $C\sub A$ for $\sim_{\cal C}$. For every $s\in C$, consider the set $$Y_s=\{t\in A: \pi(X_t)=\pi(X_s)\}$$ and the family $\cal D_s=\{X_t\}_{t\in Y_s}$. It is enough to show that for every $s\in C$, the statement holds for $\cal D_s$. Namely, it is enough to find a short set of representatives $D_s\sub Y_s$ for $\sim_{\cal D_s}$. Indeed, in that case, $\bigcup_{s\in C} D_s$ will be a set of representatives for $\sim$, and, moreover, by \cite[Lemma 3.6]{el-sbd}, it will be short.

So fix $s\in C$. The family $\cal D_s$ consists of all sets $X_t$, $t\in A$, with $\pi(X_t)=\pi(X_s)$. For every $x\in \pi(X_s)$, consider the set of fibers $\cal F_x=\{(X_t)_x\}_{t\in D_s}$. By the case of $n=1$, there is a short set of representatives $F_ x$ for $\sim_{\cal F_x}$. Then the set $\bigcup_{x\in \pi(X_s)} F_x$  is a set of representatives for $\sim_{\cal D_s}$, again short by \cite[Lemma 3.6]{el-sbd}, as needed.
\end{proof}




In Section \ref{sec-Rdef}, we will use the following fact.

\begin{fact}\label{str function} Let $\cal R=\R_{sbd}$, and $f:X\subseteq \R^n\rightarrow \R$ be an $\cal L$-definable function. Then there is an interval $B\subseteq \R$ and an affine function $\lambda: \R^n\rightarrow \R$, $x\mapsto \sum_i\lambda_ix_i+b$, such that for every $x\in X$, $f(x)\in \lambda(x)+B$.
\end{fact}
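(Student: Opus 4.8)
The plan is to deduce the statement from the Structure Theorem for semibounded sets (Fact~\ref{ed-sbd}), applied to the graph of $f$. First I would pass to $\Gamma := \{(x,f(x)) : x\in X\}\subseteq \R^{n+1}$, which is $\cal R$-definable, and write it as a finite union of cones $\Gamma = C_1\cup\dots\cup C_k$. Writing $\pi\colon \R^{n+1}\to\R^n$ for the projection onto the first $n$ coordinates, we get $X = \pi(C_1)\cup\dots\cup\pi(C_k)$, so it suffices to understand $f$ on each $\pi(C_i)$ separately and then glue the finitely many pieces.

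Fix a cone $C_i = D + \sum_{a\in(\R^{>0})^l}\sum_j a_jv_j$ with $D$ bounded and $v_1,\dots,v_l$ linearly independent, and write each $v_j=(w_j,s_j)$ with $w_j\in\R^n$, $s_j\in\R$. The key point I would establish is the sub-claim that $w_1,\dots,w_l$ are linearly independent in $\R^n$: otherwise some nonzero combination $u=\sum c_jv_j$ is a nonzero \emph{vertical} vector $(0,\dots,0,\sigma)$ with $\sigma\neq 0$, and adding $tu$ (for small $t\neq 0$, so that $a_j^0+tc_j>0$) to any point $d+\sum a_j^0v_j\in C_i$ yields another point of $C_i$ with the same first $n$ coordinates but a different last coordinate -- impossible since $C_i\subseteq\Gamma$ is part of a graph. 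Granting the sub-claim, I would extend $w_1,\dots,w_l$ to a basis of $\R^n$ with dual functionals $\ell_j$, solve $a_j=\ell_j(x)-\ell_j(d')$ from $x=d'+\sum_j a_jw_j$, and substitute into $f(x)=d''+\sum_j a_js_j$ to get $f(x)=L_i(x)+\big(d''-\sum_j s_j\ell_j(d')\big)$, where $L_i:=\sum_j s_j\ell_j$ is a fixed linear (hence affine) map $\R^n\to\R$ and the remaining term stays in a bounded subset of $\R$ since $d=(d',d'')$ ranges over the bounded set $D$. Thus $f(x)-L_i(x)$ lies in a fixed bounded interval $I_i$ for all $x\in\pi(C_i)$; if $l=0$ the cone is itself bounded and $f$ is just bounded on $\pi(C_i)$.

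For the gluing step I would put $\lambda:=L_1$, one of these finitely many linear maps. For $x\in\pi(C_i)$ we have $f(x)-\lambda(x)=(f(x)-L_i(x))+(L_i-L_1)(x)$, which lies in $I_i$ shifted by a value of the linear functional $L_i-L_1$ on $\pi(C_i)$; since the image of any subset of $\R^n$ under a linear functional has interval convex hull, this places $f(x)-\lambda(x)$ in a single interval $B_i$. Finally $B_1\cup\dots\cup B_k$ is a finite union of intervals, hence contained in an interval $B$ (its convex hull), and then $f(x)\in\lambda(x)+B$ for every $x\in X$.

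I expect the only real content to be the sub-claim that a cone contained in a graph has no vertical direction; the rest is bookkeeping with linear algebra over the finitely many cones. One point I would flag in the write-up: a single affine $\lambda$ works here precisely because $B$ is allowed to be an arbitrary (possibly unbounded) interval -- in general $f-\lambda$ cannot be made bounded, as $f(x_1,x_2)=|x_2|-|x_1|$ on $\R^2$ already shows -- so the statement should not be read as asserting a bounded $B$.
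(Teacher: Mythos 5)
Your proof is correct and is exactly the argument the paper has in mind: the paper's entire ``proof'' is a citation of the structure theorem (Fact~\ref{ed-sbd}), and your cone decomposition of the graph, the no-vertical-direction sub-claim, and the linear-algebra bookkeeping over the finitely many cones are precisely the ``easy to see'' part written out. Your flagged concern is also legitimate and worth keeping: with a single affine $\lambda$ the interval $B$ cannot in general be taken bounded (your example $f(x_1,x_2)=|x_2|-|x_1|$ is already definable in $\la\R,<,+\ra$), yet the paper later invokes this Fact in the proof of Proposition~\ref{no mult} with ``$B$ a bounded set''. What your argument actually delivers --- finitely many affine maps $L_i$ with $f-L_i$ bounded on the $i$-th piece --- is the nontrivial version that application needs; the statement with one $\lambda$ and an unqualified ``interval'' is either trivial (take $B=\R$) or, read with $B$ bounded, false.
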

\begin{proof}
Easy to see, using \cite[Fact 1.6]{ed-sbd}. 
\end{proof}

\subsection{Open questions}\label{sec-sbdqns}

We conclude this section with some open questions.

\begin{question}\label{Q pole}
Let $\cal M = \la M, <, +, \dots\ra$ be an expansion of an ordered group. Are the following equivalent?
\begin{itemize}
  \item \cal M is semibounded,
  \item \cal M has no definable poles.
\end{itemize}
\end{question}
A potential counterexample to the above question could  be given by the following structure. For $t\in 2^{-\N}$, let $f_t:[t,2t)\to (1/2t,1/t]$ be a linear homeomorphism, and define
$$\cal M=\la\R,<,+,\{f_t:\; t\in 2^{-\N} \}\ra.$$
It is easy to see that \cal M defines a pole, but we do not know if it is semibounded.


\begin{question}\label{qn-sbd}
Let $\cal B$ be the collection of all bounded  sets definable in $\la \overline \R, 2^\Z\ra$. Do
$$\la \R, <, +, \cdot_{\res [0,1]^2}, 2^\Z\ra\text{ and } \la \R, <, +, \{B\}_{B\in \cal B}, 2^\Z\ra$$
have the same definable sets?
\end{question}

As mentioned in the introduction, $\R_{sbd}$ is the unique structure strictly between $\R_{vec}$ and $\overline \R$.

 \begin{question}
 What are the possible structures between $\la \R_{vec}, 2^\Z\ra$ and $\la \overline \R, 2^\Z\ra$?
 \end{question}

Unlike the o-minimal case, there are more than one such structures: besides $\la \R_{sbd}, 2^\Z\ra$, one can consider, for example,  $\la \R_{vec},\cdot_{\res 2^\Z\times\R}\ra$. Similar examples were studied by Delon in \cite{Delon}.

\section{No new non-semialgebraic smooth functions}\label{sec-sa}

In this section, \cal R denotes a nonlinear reduct of a real closed field $\cal R'$, and $\WR$ an expansion \cal R, as fixed in the introduction.
The goal of this section is to show that if $\WR$ has (DP) and (DIM), then every definable smooth function $f:X\sub R^n\to R$ with open semialgebraic domain $X$ is semialgebraic (Theorem \ref{k4} below). The proof  is done in two steps, the first being when $X$ is short. This case is handled by reduction to the semialgebraic case, namely to \cite[Theorem 1.4]{es}.
In order to do this reduction, we first prove some additional lemmas for semibounded structures in Section \ref{sec-sbd2} below.
The general case is done by reduction to the short case, using some basic facts from real algebraic geometry, which we recall in Section \ref{sec-rag}.

\subsection{More on semibounded structures} \label{sec-sbd2}

For the rest of Section \ref{sec-sa}, we fix a short  interval $I =(-a, a)\sub R$ and the  order-preserving semialgebraic diffeomorphism $\tau(x)= \frac{ax}{\sqrt{x^2+1}}: R\rightarrow I$.  We let $\cal I=\la I, <, \oplus,\odot\ra$ be the field structure induced on $I$ from $\cal R'$ via $\tau$. Namely, for every $x,y\in I$,
$$x\oplus y =\tau(\tau^{-1}(x)+\tau^{-1}(y)) $$
and
$$x\odot y=\tau(\tau^{-1}(x)\cdot\tau^{-1}(y)).$$
Denote by $\cal L_\CI$ the language of \cal I. Clearly, \cal I is a real closed field. It is in fact pure.

\begin{fact}[{\cite[Corollary 3.6]{mpp}}]\label{pet}
If $X\subseteq I^n$ is semialgebraic (that is, definable in $\cal R'$), then $X$ is $\cal L_\CI$-definable.
\end{fact}
\begin{proof}
By \cite{opp}, there is a semialgebraic isomorphism $\sigma: \cal R'\to \cal I$. Now let $X\sub I^n$ be semialgebraic. Hence, $\sigma^{-1}(X)$ is semialgebraic. But since $\sigma$ is an isomorphism between the structures $\cal R'$ and \cal I, this means that $X$ is definable in \cal I.
\end{proof}

Definable completeness of \cal R easily implies that \cal I is also definably complete. We write $(\frac{x}{y})_{\cal I}$ for the division  in $\cal I$.  Since the order-topology on $I$ coincides with the subspace topology from $R$,  the dimension of a subset of $I^n$ with respect to either structure is the same. Moreover, if $f:X\sub I^n\to I$ is any function, then continuity of $f$ is invariant between the two structures; that is, $f$ is continuous with respect to $\cal I$ if and only if it is continuous with respect to $\cal R$.
We next prove that smoothness of $f$ is also invariant between the two structures. Let us write  $f\in \cal C^\infty(\cal R)$ if $f$ is smooth in the sense of \cal R (or, rather $\cal R'$),
and $f\in \cal C^\infty(\cal I)$ if it is smooth in the sense of $\cal I$. For $n=1$, we denote
$$ \left(\frac{d f}{d x}\right)_{\cal I}=\lim_{h\rightarrow 0}\left(\frac{f(x\oplus t)\ominus f(x)}{t}\right)_{\cal I}.$$

\begin{lemma}\label{lem-Cinfty} Let $f:X\sub I^n\to I$ be any function with open domain.
Then $f\in \cal C^\infty(\cal R)$ if and only if $f\in \cal C^\infty (\cal I)$.
\end{lemma}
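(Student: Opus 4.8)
The plan is to reduce the statement to the classical chain rule applied to the semialgebraic diffeomorphism $\tau$. The key observation is that $\tau : \cal R \to \cal I$ is a field isomorphism that is \emph{also} a semialgebraic $\cal C^\infty(\cal R)$-diffeomorphism onto its image, so it transports differentiability in one sense to differentiability in the other, provided we can match up the two notions of derivative. Concretely, for $n=1$, I would first verify the identity relating the $\cal I$-derivative of $f$ to the ordinary ($\cal R$-)derivative: writing $\odot, \oplus, (\cdot/\cdot)_{\cal I}$ for the field operations on $I$, and pulling everything back through $\tau$, one has that $g := \tau^{-1}\circ f\circ \tau : \tau^{-1}(X)\sub R\to R$ satisfies $f = \tau\circ g\circ \tau^{-1}$, and the $\cal I$-difference quotient $\left(\frac{f(x\oplus t)\ominus f(x)}{t}\right)_{\cal I}$ can be rewritten, using the definitions of $\oplus,\odot$, as $\tau$ applied to an ordinary difference quotient of $g$ (up to composing with $\tau^{-1}$ in the right places). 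Since $\tau$ is a semialgebraic diffeomorphism of $R$ with nowhere-vanishing derivative, taking the limit $t\to 0$ shows that $\left(\frac{df}{dx}\right)_{\cal I}$ exists (in $\cal I$) if and only if $g'$ exists (in $\cal R$), if and only if $f'$ exists (in $\cal R$, by the ordinary chain rule applied to $f = \tau\circ g\circ\tau^{-1}$), and moreover the $\cal I$-derivative is itself obtained from $f'$ by composing with $\tau$, $\tau^{-1}$, and the $\cal C^\infty(\cal R)$ function $\tau'$.

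Next I would set up an induction on the order of differentiability. The content of the previous paragraph is exactly the base case: $f$ is $\cal C^1(\cal R)$ iff $f$ is $\cal C^1(\cal I)$, because $f$ is $\cal C^1$ in either sense iff $g$ is, and in that case $\left(\frac{df}{dx}\right)_{\cal I}$ is expressed as a composition of $f' $ with fixed $\cal C^\infty(\cal R)$ (equivalently $\cal C^\infty(\cal I)$) semialgebraic maps built from $\tau, \tau^{-1}, \tau'$. Then, assuming $f\in \cal C^k(\cal R)\Leftrightarrow f\in\cal C^k(\cal I)$ for all functions, apply the inductive hypothesis to the function $\left(\frac{df}{dx}\right)_{\cal I}$: it is $\cal C^{k}$ in one sense iff in the other, and being a composition involving $f'$, this is governed by whether $f'\in \cal C^k(\cal R)\Leftrightarrow f'\in\cal C^k(\cal I)$, which is again the inductive hypothesis applied to $f'$. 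Taking the conjunction over all $k$ gives $f\in\cal C^\infty(\cal R)\Leftrightarrow f\in\cal C^\infty(\cal I)$.

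For $n>1$, smoothness in either sense is equivalent to the existence and continuity (continuity being already known to be invariant between the two structures, as remarked in the text) of all iterated partial derivatives, and each partial derivative is a one-variable limit of the type just analyzed, with the other coordinates frozen. So the multivariable case follows from the one-variable case coordinate by coordinate, combined with the same induction on total order of differentiation; one should note that freezing coordinates is harmless since $\tau$ acts coordinatewise on $I^n$.

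The main obstacle I anticipate is purely bookkeeping: carefully writing the $\cal I$-difference quotient of $f$ in terms of an ordinary difference quotient of $g=\tau^{-1}\circ f\circ\tau$ and verifying that the ``correction factors'' that appear (derivatives of $\tau$ and $\tau^{-1}$ evaluated at the relevant points) are genuinely $\cal C^\infty$ in \emph{both} senses, so that the induction closes. There is no deep idea here beyond the chain rule plus the fact that $\tau$ is a semialgebraic $\cal C^\infty$-diffeomorphism with nonvanishing derivative; the care is entirely in not conflating the two difference-quotient operations and in handling the limit, which is legitimate because the order topology on $I$ agrees with the subspace topology from $R$.
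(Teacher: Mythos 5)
Your proposal is correct and follows essentially the same route as the paper: both reduce to the one-variable case, rewrite the $\cal I$-difference quotient as $\tau$ applied to an ordinary difference quotient of $\tau^{-1}\circ f\circ\tau$, conclude that $\left(\frac{df}{dx}\right)_{\cal I}=\tau\circ F'\circ\tau^{-1}$ with $F=\tau^{-1}\circ f\circ\tau$ smooth, and then induct on the order of the (partial) derivatives. The only cosmetic difference is that the paper proves one direction and declares the converse symmetric, while you track the equivalence at each finite order $k$; the substance is identical.
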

\begin{proof}
We only prove the left-to-right direction, since the other direction is similar. Assume $f\in \cal C^\infty(\cal R)$. Working inductively on the $m$-th partial derivatives of $f$ with respect to \cal I, it is enough to show that each partial derivative of $f$ with respect to \cal I is in $\cal C^{\infty}(\cal R)$. For this, it is enough to show that if $n=1$ and $f\in \cal C^{\infty}(\cal R)$, then there is an $\cal L$-definable function $g\in \cal C^\infty(\cal R)$ such that $g=\left(\frac{df}{dx}\right)_{\cal I}$. We have:

\begin{align*}
 \left(\frac{d f}{d x}\right)_{\cal I}
&=\lim_{t\rightarrow 0}\left(\frac{f(x\oplus t)\ominus f(x)}{t}\right)_{\cal I} \\&
=\lim_{t\rightarrow 0}\left(\frac{ \tau ( \tau^{-1} f \tau (\tau^{-1}(x)+\tau^{-1}(t)) - \tau^{-1} f(x))}{t}\right)_{\cal I} \\ &
=\lim_{t\rightarrow 0} \tau \left(\frac{\tau^{-1} \tau ( \tau^{-1} f \tau (\tau^{-1}(x)+\tau^{-1}(t)) - \tau^{-1} f(x))}{\tau^{-1}(t)}\right)
\\ &
=\lim_{t\rightarrow 0} \tau \left(\frac{ \tau^{-1} f \tau (\tau^{-1}(x)+\tau^{-1}(t)) - \tau^{-1} f \tau (\tau^{-1} x)}{\tau^{-1}(t)}\right)
& \\\\ &\hspace*{-2.2cm}\text{Letting $F=\tau^{-1} f \tau$,
$X=\tau^{-1}(x)$ and $T=\tau^{-1}(t)$, the above equals} \\\\&
=\lim_{T\rightarrow 0} \tau \left(\frac{F(X+T)- F(X)}{T}\right)\\&
=\tau \frac {d F}{dX} \in \cal C^{\infty}(\cal R),
\end{align*}
as needed.
\end{proof}

For the rest of this section, we  fix $\WI$ to be the structure on $I$ induced from $\WR$. Namely,
$$\WI =\la  I, \{X\}_{X \sub I^n \text{ definable}}\ra.$$
Clearly, $\WI$ expands $\cal I$.

\begin{lemma}\label{lem-dpdim} Suppose $\WR$ has \textup{(DP)} and \textup{(DIM)}. Then so does $\WI$.
\end{lemma}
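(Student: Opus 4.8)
The plan is to transfer both properties from $\WR$ to $\WI$ by using the semialgebraic diffeomorphism $\tau$ and its powers $\tau^{(n)}:R^n\to I^n$ (the coordinatewise application), together with Fact \ref{pet} to cross back and forth between ``semialgebraic'' and ``$\cal I$-definable''. The key observations are that $\tau$ is a semialgebraic homeomorphism, so a set $X\sub I^n$ is $\WI$-definable iff $(\tau^{(n)})^{-1}(X)\sub R^n$ is $\WR$-definable; that $\tau$ preserves dimension and interior (the subspace topology on $I^n$ from $R^n$ agrees with its own order topology, as already noted in the text); and that $\tau$ carries $\cal R$-definable sets to semialgebraic subsets of $I^n$, hence by Fact \ref{pet} to $\cal I$-definable sets, and conversely every $\cal I$-definable set is semialgebraic (being definable in a reduct of a real closed field) and so pulls back under $\tau$ to a semialgebraic, in particular $\cal R$-definable, set.

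First I would verify \textup{(DIM)} for $\WI$. Let $\{X_t\}_{t\in S}$ be an $\cal I$-definable family of subsets of $I^n$ with $\dim S=0$. Pulling back via $\tau$, the family $\{(\tau^{(n)})^{-1}(X_t)\}_{t\in (\tau^{(k)})^{-1}(S)}$ is semialgebraic, hence $\cal R$-definable, with a $0$-dimensional index set (dimension is preserved under $\tau$). Applying \textup{(DIM)} for $\WR$ gives $\dim\bigcup_t (\tau^{(n)})^{-1}(X_t)=\max_t \dim (\tau^{(n)})^{-1}(X_t)$, and since $\tau^{(n)}$ preserves dimensions of all the sets involved (including the union, as $\tau^{(n)}$ is a bijection commuting with union), we recover $\dim\bigcup_t X_t=\max_t\dim X_t$. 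A small point to handle is that an $\cal I$-definable family need not a priori be ``$\cal R$-definable as a family'' — but semialgebraicity of each $X_t$ uniformly, plus the fact that $\cal I$ is a reduct of the semialgebraic structure, gives a semialgebraic family, and then Fact \ref{pet}/\ref{ed-sbd}-type reasoning is not even needed here since semialgebraic $\Rightarrow$ $\cal R$-definable is immediate.

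Next I would verify \textup{(DP)} for $\WI$. Given a $\WI$-definable $X\sub I^n$, set $X'=(\tau^{(n)})^{-1}(X)\sub R^n$, which is $\WR$-definable. For (I): apply \textup{(DP)(I)} to $X'$ to get an $\cal R$-definable family $\{Y_t\}_{t\in R^m}$ and $0$-dimensional definable $S\sub R^m$ with $X'=\bigcup_{t\in S}Y_t$; then push forward via $\tau^{(n)}$, so $X=\bigcup_{t\in S}\tau^{(n)}(Y_t)$, each $\tau^{(n)}(Y_t)$ is semialgebraic hence $\cal I$-definable by Fact \ref{pet}, and one checks the family $\{\tau^{(n)}(Y_t)\}$ is $\cal I$-definable uniformly (again via Fact \ref{pet} applied to the graph of the family). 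For (II): apply \textup{(DP)(II)} to $X'$ to get an $\cal R$-definable chunk $Z\sub X'$; then $\tau^{(n)}(Z)$ is a semialgebraic, hence $\cal I$-definable, set which I claim is an $\cal I$-definable chunk of $X$ — it is an $\cal I$-definable cell (cells are preserved by the semialgebraic diffeomorphism $\tau^{(n)}$, which is coordinatewise monotone), $\dim\tau^{(n)}(Z)=\dim Z=\dim X'=\dim X$, and the local condition (every point of $\tau^{(n)}(Z)$ has a box neighborhood meeting $X$ inside $\tau^{(n)}(Z)$) transfers because $\tau^{(n)}$ is a homeomorphism carrying boxes to boxes — here I use that $\tau$ is strictly increasing, so it maps an open interval $(a,b)$ to the open interval $(\tau(a),\tau(b))$, hence open boxes to open boxes.

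The main obstacle I expect is the bookkeeping around \emph{uniformity} of the transferred families: DP(I) and DIM are statements about $\cal R$-definable \emph{families} indexed by a parameter, and one must be careful that applying $\tau$ coordinatewise to a family (with its parameter also transported by an appropriate power of $\tau$) again yields a genuine $\cal I$-definable family, not merely a set-by-set collection of $\cal I$-definable sets. This is where Fact \ref{pet} is doing real work: the graph $\{(t,x): x\in Y_t\}$ is $\cal R$-definable, its image under the relevant $\tau$-powers is semialgebraic, hence $\cal I$-definable, which is exactly the assertion that the transported family is $\cal I$-definable. Once this is set up cleanly, the dimension and topological bookkeeping (preservation of $\dim$, interior, cells, boxes under the semialgebraic monotone homeomorphism $\tau$) is routine and I would only sketch it.
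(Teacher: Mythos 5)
Your transfer mechanism breaks at the very first step. The map $\tau\colon R\to I$, $x\mapsto \frac{ax}{\sqrt{x^2+1}}$, is a bijection between the unbounded set $R$ and the bounded interval $I$, i.e.\ a pole; it is semialgebraic, but in the only nontrivial case (namely $\cal R$ semibounded, $I$ a proper short interval) it is definable neither in $\cal R$ nor, as far as the hypotheses of the lemma tell us, in $\WR$ --- the absence of such definable bijections is precisely what semiboundedness amounts to, and establishing it for $\WR$ is the point of Proposition \ref{sharp}. Consequently your claimed equivalence ``$X\sub I^n$ is $\WI$-definable iff $(\tau^{(n)})^{-1}(X)$ is $\WR$-definable'' is unjustified: the preimage of a $\WI$-definable set under the non-definable map $\tau^{(n)}$ need not be $\WR$-definable, so \textup{(DP)} and \textup{(DIM)} for $\WR$ cannot be applied to $X'=(\tau^{(n)})^{-1}(X)$, and the whole pull-back/push-forward scheme collapses. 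A second, independent error is your repeated use of ``semialgebraic $\Rightarrow$ $\cal R$-definable''; the implication goes the other way, since $\cal R$ is a (typically proper) reduct of a real closed field --- e.g.\ the graph of global squaring is semialgebraic but not definable in $\la \R,<,+,\cdot_{\res [0,1]^2}\ra$. The correct and nontrivial direction is Fact \ref{pet}: semialgebraic subsets of $I^n$ are $\cal I$-definable.

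The pullback is also unnecessary: by the definition of $\WI$ as the structure induced from $\WR$, a $\WI$-definable $X\sub I^n$ is literally a $\WR$-definable subset of $R^n$, so one applies \textup{(DP)} and \textup{(DIM)} for $\WR$ to $X$ itself, in place. The genuine work, which your proposal does not address, then lies in \textup{(DP)(I)}: the $\cal R$-definable family $\{Y_t\}_{t\in R^m}$ it produces has parameter set $S\sub R^m$ that may lie far outside $I^m$, and before Fact \ref{pet} can be invoked one must replace $S$ by a set of representatives contained in a short set (hence relocatable into $I^m$). This is exactly what Lemma \ref{lm com} supplies; transporting $S$ by $\tau^{(m)}$, as you propose, again yields a set that $\WR$ (hence $\WI$) need not define. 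For \textup{(DP)(II)} and \textup{(DIM)} the argument is then direct: a chunk $Y\sub X\sub I^n$ is already an $\cal R$-definable, hence semialgebraic, hence (by Fact \ref{pet}) $\cal I$-definable subset of $I^n$, and the order topology of $I$ agrees with the subspace topology from $R$, so no transport of boxes or cells is needed at all.
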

\begin{proof}
Let $X\sub R^n$ be a set definable in $\WI$. \smallskip

\noindent (DP)(I): Observe that $X$ is also definable in $\WR$. By (DPI) for $\WR$  there is an $\cal L$-definable family $\{Y_t\}_{t\in R^m}$ of subsets of $R^n$, and a definable set $S\sub R^m$ with $\dim S=0$, such that $X=\bigcup_{t\in S} Y_t$. By Lemma \ref{lm com}, we may assume that $S\subseteq I^m$. By Fact \ref{pet}, the family $\{Y_t\}_{t\in S}$ is $\cal L_\CI$-definable, as needed.\smallskip

\noindent (DP)(II): Let $X$ be a set definable in $\WI$  and  $Y$  an $\cal L$-chunk of $X$. Since the topologies on $\WI$ and $\WR$ coincide, and, by Fact \ref{pet}, $Y$ is $\cal L_\CI$-definable, it follows that $Y$ is also an $\cal L_\CI$-chunk of $X$.\smallskip

\noindent (DIM): Straightforward.
\end{proof}

\subsection{Real algebraic geometry}\label{sec-rag} Let $\cal R=\la R, <, +, \cdot\ra$ be a real closed field. By an \emph{algebraic set $A\sub R^n$}, we mean the zero set of a polynomial in $R[X]$.  The \emph{Zariski closure} of a set $V\sub R^n$ is the intersection of every algebraic set containing $V$,  denoted by  $\overline V^{zar}$. Note that $\overline V^{zar}$ is algebraic, because $R[X_1,\ldots, X_n]$ is Noetherian.

Let $V$ be an algebraic set. We say that $V$ is \emph{irreducible} if, whenever $V=V_1\cup V_2$, with each $V_i$ algebraic, we have $V=V_i$, for $i=1$ or $2$.

\begin{fact}\label{Coste1}
  Let $X$ be a semialgebraic set. Then $\dim X = \dim (\overline{X}^{zar})$.
\end{fact}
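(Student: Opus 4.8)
The inequality $\dim X\le \dim(\overline X^{zar})$ is immediate from $X\subseteq \overline X^{zar}$ and monotonicity of dimension: if a projection of $X$ to $k$ coordinates has non-empty interior, so does the corresponding projection of $\overline X^{zar}$. So the content of the statement is that passing to the Zariski closure cannot \emph{increase} the (semialgebraic) dimension.

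The plan is to deduce this from the single assertion
\smallskip

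\noindent\textbf{Claim.} If $Y\subseteq R^m$ is semialgebraic with empty interior, then $\overline Y^{zar}\ne R^m$,
\smallskip

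\noindent and then prove the Claim by induction on $m$ via cell decomposition. For the deduction, suppose towards a contradiction that $d:=\dim X$ while $\dim(\overline X^{zar})\ge d+1$; by definition of dimension there is a coordinate projection $\pi\colon R^n\to R^{d+1}$ whose image $\pi(\overline X^{zar})$ has non-empty interior. Now $\overline{\pi(X)}^{zar}$ is an algebraic subset of $R^{d+1}$, and since $\pi$ is a coordinate projection, $\pi^{-1}\big(\overline{\pi(X)}^{zar}\big)$ is an algebraic subset of $R^n$ containing $X$; hence $\overline X^{zar}\subseteq \pi^{-1}\big(\overline{\pi(X)}^{zar}\big)$, i.e.\ $\pi(\overline X^{zar})\subseteq \overline{\pi(X)}^{zar}$. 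Thus $\overline{\pi(X)}^{zar}$ has non-empty interior; since $R$ is infinite, no nonzero polynomial vanishes on an open box, so $\overline{\pi(X)}^{zar}=R^{d+1}$. On the other hand projections do not increase dimension, so $\dim\pi(X)\le \dim X=d$ and $\pi(X)$ has empty interior in $R^{d+1}$ --- contradicting the Claim.

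It remains to prove the Claim. For $m=1$, a semialgebraic subset of $R$ with empty interior is finite, and some nonzero polynomial vanishes on it. For $m>1$, apply o-minimal cell decomposition to write $Y$ as a finite union of cells, each of dimension $\le\dim Y<m$; since $R^m$ is irreducible in the Zariski topology (equivalently, a finite union of proper algebraic subsets of $R^m$ is proper), it suffices to find a nonzero polynomial vanishing on a single cell $C$ with $\dim C<m$. Let $\pi'\colon R^m\to R^{m-1}$ drop the last coordinate and put $C'=\pi'(C)$. By the recursive structure of cells, $C$ is either a band over $C'$ (whence $\dim C'=\dim C-1<m-1$) or the graph of a continuous semialgebraic function $f\colon C'\to R$ (whence $\dim C'=\dim C$). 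If $\dim C'<m-1$, the induction hypothesis yields a nonzero $Q\in R[X_1,\dots,X_{m-1}]$ vanishing on $C'$, hence on $C$. The one remaining case is that $C$ is the graph of a continuous semialgebraic $f$ over an \emph{open} cell $C'\subseteq R^{m-1}$; here I would invoke the standard fact that a semialgebraic function on an open domain is algebraic over the coordinate functions, i.e.\ there is a nonzero $P\in R[X_1,\dots,X_{m-1},X_m]$ with $P(x,f(x))=0$ for all $x\in C'$. Then $P$ vanishes on $C$, so $\overline C^{zar}\subseteq\{P=0\}\ne R^m$, completing the Claim.

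The only genuinely non-formal input is the last one --- that a semialgebraic function satisfies a nontrivial polynomial identity --- and I expect that (equivalently, the Claim) to be the single real step; everything else is routine bookkeeping with semialgebraic dimension and the irreducibility of affine space. Alternatively, since the statement is classical, one could instead simply cite it (for instance from Bochnak--Coste--Roy's \emph{Real Algebraic Geometry}, or Coste's \emph{Introduction to Semialgebraic Geometry}).
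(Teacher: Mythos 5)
Your proposal is correct. Note, though, that the paper does not prove this statement at all: it is quoted as a classical fact of real algebraic geometry (the label and the surrounding citations point to Coste/Bochnak--Coste--Roy, e.g.\ BCR Proposition 2.8.2), so your closing remark that one could ``simply cite it'' is in fact exactly what the authors do. Your argument is a legitimate self-contained derivation: the reduction of the inequality $\dim(\overline{X}^{zar})\le\dim X$ to the Claim via coordinate projections is sound (in particular $\pi(\overline{X}^{zar})\subseteq\overline{\pi(X)}^{zar}$ because preimages of algebraic sets under coordinate projections are algebraic, and an algebraic subset of $R^{d+1}$ with interior is all of $R^{d+1}$), and the induction on $m$ via cell decomposition correctly isolates the one non-formal ingredient, namely that a semialgebraic function satisfies a nontrivial polynomial identity (BCR Proposition 2.9.1); combined with the irreducibility of $R[X_1,\dots,X_m]$ this closes the Claim. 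Two tiny remarks: you could avoid the case split in the graph case, since algebraicity of semialgebraic functions holds over arbitrary semialgebraic domains, not just open ones; and it is worth saying explicitly that for $Y\subseteq R^m$ the only projection onto $m$ coordinates is the identity, so ``empty interior'' is literally the same as $\dim Y<m$ under the paper's definition of dimension. What your route buys is independence from the black-boxed literature at the cost of one standard input; what the paper's route buys is brevity.
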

\begin{fact}\label{Coste2} Let $Y$ and $Y'$ be two irreducible algebraic sets of dimension $n$, with $\dim(Y\cap Y')=n$. Then $Y=Y'$. \end{fact}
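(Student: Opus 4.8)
The plan is to reduce the statement to the standard fact that a proper algebraic subset of an irreducible algebraic set has strictly smaller dimension, and then to apply that fact twice.

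First I would set $Z:=Y\cap Y'$. This is again an algebraic set — the common zero set of the defining polynomials of $Y$ and of $Y'$ — and by hypothesis $Z\sub Y$ with $\dim Z=n=\dim Y$. The first step is to show that this already forces $Z=Y$. Granting this, $Y=Z\sub Y'$, and then running the very same argument on the pair $Y\sub Y'$ (now using $\dim Y=n=\dim Y'$ together with irreducibility of $Y'$ in place of that of $Y$) gives $Y=Y'$, which is the claim. So everything comes down to the following: \emph{if $Z\sub Y$ with $Z$ algebraic, $Y$ irreducible, and $\dim Z=\dim Y$, then $Z=Y$.}

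To prove this I would use two standard facts of the dimension theory of real algebraic sets over the real closed field $R$: (i) an algebraic set is irreducible if and only if its vanishing ideal $I(\,\cdot\,)=\{f\in R[X]:f\text{ vanishes on it}\}$ is prime — for the direction used here, if $fg\in I(Y)$ then $Y=(Y\cap\{f=0\})\cup(Y\cap\{g=0\})$, so irreducibility of $Y$ forces $f\in I(Y)$ or $g\in I(Y)$; and (ii) the semibounded, ahem, the semialgebraic dimension of an algebraic set equals the Krull dimension of its coordinate ring $R[X]/I(\,\cdot\,)$. By (i), $R[Y]:=R[X]/I(Y)$ is an integral domain, and by (ii) its Krull dimension equals $\dim Y=n$. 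Suppose toward a contradiction $Z\subsetneq Y$. Since $Z$ is the zero set of $I(Z)$, the strict inclusion gives $I(Z)\supsetneq I(Y)$, so pick $g\in I(Z)\sm I(Y)$; its image $\bar g\in R[Y]$ is nonzero. Then $Z\sub Y\cap\{g=0\}$, and Krull's principal ideal theorem in the domain $R[Y]$ gives $\dim\big(R[Y]/(\bar g)\big)=n-1$. The coordinate ring of $Y\cap\{g=0\}$ is a quotient of $R[Y]/(\bar g)$, so $\dim Z\le\dim(Y\cap\{g=0\})\le n-1<n=\dim Y$, a contradiction. This proves the reduction step, and with it the fact.

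The only non-elementary ingredients are facts (i) and (ii) above together with the Hauptidealsatz, all of which are standard in real algebraic geometry (and are, presumably, exactly what the reference attached to this Fact supplies). I expect the only point requiring any care to be the passage between algebraic sets and their coordinate rings — in particular making sure one works with the genuine vanishing ideal so that dimension is read off correctly over $R$; everything else is a one- or two-line application.
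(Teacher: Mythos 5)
Your proof is correct, and its top-level structure coincides with the paper's: the paper simply cites \cite[Lemma 3.4]{es}, which is exactly your reduction lemma (an algebraic subset of an irreducible algebraic set of the same dimension must be the whole set), applied once to $Y\cap Y'\subseteq Y$ and once to $Y\cap Y'\subseteq Y'$. The extra content you supply --- a proof of that lemma via the primeness of $\mathcal I(Y)$, the identification of semialgebraic dimension with the Krull dimension of the coordinate ring (\cite[2.8.2, 2.8.3]{BCR}), and the Hauptidealsatz --- is sound (only the inequality $\dim\big(R[Y]/(\bar g)\big)\le n-1$ is actually needed, which holds even when $Y\cap\{g=0\}$ is much smaller than a hypersurface, as happens over real closed fields).
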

\begin{proof}
  By \cite[Lemma 3.4]{es}, $Y=Y\cap Y' = Y'$.
\end{proof}

\begin{defn}(\cite[Definitions 2.9.3, 2.9.9]{BCR})\label{def-Nashsub}
A \emph{Nash function} $f: X\sub R^n\to R^m$ is a semialgebraic smooth function with open domain. A \emph{Nash-diffeomorphism}  $f:X\to Y$ is a Nash function which is a bijection and whose inverse is also Nash.

A semialgebraic set $V\subseteq R^m$ is a \emph{Nash-submanifold of dimension $d$} if,  for every $x\in V$, there is a Nash-diffeomorphism $\phi$ from an open semialgebraic neighborhood $U$ of the origin in $R^m$ onto an open semialgebraic neighborhood  $U'$ of $x$ in $R^m$, such that $\phi(0)=x$ and $\phi((R^d\times \{0\})\cap U)=V\cap U'$.
\end{defn}

Note  that the graph of a Nash function with connected domain is a connected Nash-submanifold.

\begin{fact}[{\cite[Lemma 8.4.1]{BCR}}]\label{Coste3}
Let $V\subseteq R^m$ be a connected  Nash-submanifold. Then $\overline V^{zar}$ is irreducible.
\end{fact}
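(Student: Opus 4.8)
The plan is to argue directly from the definition of irreducibility. Write $W=\overline V^{zar}$ and suppose $W=Z_1\cup Z_2$ with each $Z_i$ algebraic; since $\overline V^{zar}\subseteq Z_i\subseteq W$ would force $W=Z_i$, it suffices to show that $V\subseteq Z_1$ or $V\subseteq Z_2$. Fix polynomials $f_1,\dots,f_r$ whose common zero set is $Z_1$, and note that $V\cap Z_1$ and $V\cap Z_2$ are relatively closed subsets of $V$ whose union is $V$ (because $V\subseteq W=Z_1\cup Z_2$).

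First I would split into two cases according to whether $V\cap Z_1$ has nonempty interior in $V$. If $\mathrm{int}_V(V\cap Z_1)=\emptyset$, then $V\smallsetminus Z_1$ is dense in $V$; but $V\smallsetminus Z_1\subseteq V\cap Z_2$, and $V\cap Z_2$ is closed in $V$, so passing to closures inside $V$ gives $V=\overline{V\smallsetminus Z_1}\subseteq V\cap Z_2$, i.e.\ $V\subseteq Z_2$, and we are done.

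The interesting case is when $U_1:=\mathrm{int}_V(V\cap Z_1)$ is nonempty. Here I would show $U_1$ is also closed in $V$, so that connectedness of $V$ forces $U_1=V$, hence $V\subseteq Z_1$. This is the one point where the Nash structure enters. Fix $x\in V$ in the closure of $U_1$ relative to $V$. By Definition \ref{def-Nashsub} there is, after shrinking, a Nash map $\phi$ from a connected open set $D\subseteq R^d$ (an open ball, identified with $(R^d\times\{0\})\cap U$) onto a relatively open neighbourhood $\phi(D)\ni x$ in $V$. Each composite $f_j\circ\phi$ is a Nash function on the connected set $D$; since $\phi(D)$ is a neighbourhood of $x$ in $V$ it meets $U_1$ in a nonempty relatively open set, on whose $\phi$-preimage every $f_j\circ\phi$ vanishes. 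By the identity principle for Nash functions, $f_j\circ\phi\equiv 0$ on $D$, i.e.\ $\phi(D)\subseteq Z_1$. Thus $\phi(D)$ is a relatively open subset of $V\cap Z_1$ containing $x$, so $x\in U_1$; hence $U_1$ is clopen in $V$, and the argument is complete.

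The only real content is the identity principle invoked in the last step: a Nash function on a connected (semialgebraic) domain that vanishes on a nonempty relatively open subset vanishes identically. Over $\mathbb R$ this is immediate from the real-analyticity of Nash functions; over an arbitrary real closed field it is a standard fact of Nash function theory, and this is precisely where \cite[Lemma 8.4.1]{BCR} carries the weight, so I would simply quote it rather than reprove it. Everything else—the reduction to $V\subseteq Z_i$, the interior/density dichotomy, and the clopen argument—is elementary point-set topology using only that $V$ is connected and that algebraic sets are closed.
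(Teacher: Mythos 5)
The paper does not prove this statement at all: it is quoted verbatim as \cite[Lemma 8.4.1]{BCR}, so there is nothing internal to compare your argument against. Your proof is essentially the standard one and is correct as a reduction: the case split on $\mathrm{int}_V(V\cap Z_1)$, the density argument, and the clopen argument from connectedness are all sound, and the only genuine input is the identity principle for Nash functions on a connected domain, which you correctly isolate and attribute to \cite{BCR}. Two remarks. First, be aware that the identity principle is not a side lemma but essentially the whole content: the proof in \cite{BCR} runs through the statement that the ring of Nash functions on a connected Nash submanifold is an integral domain, which is just an algebraic repackaging of the same fact (if $fg$ vanishes on $V$ for polynomials $f,g$, then one of them vanishes on a nonempty open piece, hence everywhere, so the ideal of $V$ is prime). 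Your topological version and the integral-domain version are interchangeable. Second, over an arbitrary real closed field $R$ the order topology on $R^m$ is badly disconnected, so ``connected'' must be read as \emph{semialgebraically} connected throughout; your clopen argument survives this reading because $U_1=\mathrm{int}_V(V\cap Z_1)$ and $\phi(D)$ are semialgebraic, but the identity principle you invoke must then be the semialgebraic one from \cite{BCR} rather than the real-analytic continuation argument, which is only available over $\mathbb R$. With that caveat made explicit, the argument is complete.
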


\subsection{Proof of Theorem \ref{main1}} We are now ready to prove the main result of this section.

\begin{theorem}\label{k4} Assume  $\WR$ satisfies \textup{(DP)}, \textup{(DIM)} and is definably complete. Let $f:X\subseteq R^n\rightarrow R$ be a definable smooth function, where $X$ is an open  semialgebraic set. Then $f$ is semialgebraic.
\end{theorem}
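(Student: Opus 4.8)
The plan is to split into the bounded and unbounded cases as the paper announces. First I would handle the case where $X$ is bounded. After translating, assume $X\subseteq I^n$ for the short interval $I$ fixed above, so that $X$ is semialgebraic and, by Fact \ref{pet}, $\cal I$-definable, and by Lemma \ref{lem-Cinfty} the function $f$ is smooth in the sense of the real closed field $\cal I$. By Lemma \ref{lem-dpdim}, $\WI$ has (DP) and (DIM). Now $\WI$ is an expansion of the real closed field $\cal I$ satisfying (DP) and (DIM), so \cite[Theorem 1.4]{es} applies and tells us $f$ is $\cal I$-definable; hence $f$ is semialgebraic (being $\cal I$-definable in a real closed field that is itself a reduct of the ambient field via the semialgebraic map $\tau$). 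This reduces the bounded case to the main theorem of \cite{es}.

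Second, I would reduce the unbounded case to the bounded one. Given $f:X\to R$ with $X$ open semialgebraic, the idea is to cover $X$ (up to a lower-dimensional semialgebraic set, which by o-minimality of the semialgebraic structure is harmless for determining whether $f$ is semialgebraic) by boxes and, on each box, apply a semialgebraic Nash-diffeomorphism onto a bounded open semialgebraic set; precompose $f$ with this diffeomorphism to get a definable smooth function on a bounded open semialgebraic domain, which is semialgebraic by the first case, and then pull back. Concretely, on any open box $B=(a_1,b_1)\times\cdots\times(a_n,b_n)$ with possibly infinite endpoints, the coordinatewise map built from $\tau$ (or an affine rescaling of it) gives a Nash-diffeomorphism onto a bounded open box $B'$; if $g:=f\circ \phi$ is semialgebraic on $B'$, then $f=g\circ\phi^{-1}$ is semialgebraic on $B$. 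Finally I would glue: the graph of $f$ is a definable set which, restricted over each such box, is semialgebraic, and its Zariski closure machinery from Section \ref{sec-rag} lets one conclude. Here is where Facts \ref{Coste1}--\ref{Coste3} enter: the graph of $f\restriction B$ over a connected box is a connected Nash-submanifold of dimension $n$, so its Zariski closure is an irreducible algebraic set $Y_B$ of dimension $n$; the graph of $f$ over all of $X$ has dimension $n$ and lies in $\overline{\mathrm{graph}(f)}^{zar}$, and by Fact \ref{Coste2} the irreducible components witnessed by overlapping boxes must agree, so finitely many $Y_B$ suffice to capture the Zariski closure, whence $\mathrm{graph}(f)$ is a semialgebraic subset of an algebraic set of the right dimension — and being a graph of dimension $n$ inside an $n$-dimensional algebraic set, it is semialgebraic.

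The main obstacle I expect is the gluing step in the unbounded case: one must argue that the \emph{finitely many} local Zariski closures $Y_B$ obtained from a cover of $X$ by boxes genuinely assemble into the global Zariski closure of the graph, rather than producing infinitely many components or losing control at the boundary. The connectedness hypothesis in Fact \ref{Coste3} is what forces irreducibility, so one needs $X$ (or a large semialgebraic open subset of it) to be covered by \emph{connected} open semialgebraic pieces whose overlaps are nonempty and $n$-dimensional, so that Fact \ref{Coste2} can be invoked to identify components across the cover. Managing this with the compactness of the semialgebraic cell decomposition of $X$, and checking that the exceptional lower-dimensional set where the cover fails does not affect semialgebraicity of $f$ (again using that a semialgebraic function agreeing with $f$ off a lower-dimensional set, together with $f$ being continuous, must equal $f$), is the delicate bookkeeping. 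Everything else — the change of coordinates via $\tau$, the invariance of smoothness (Lemma \ref{lem-Cinfty}), and the transfer of (DP)/(DIM) to $\WI$ (Lemma \ref{lem-dpdim}) — is already in place.
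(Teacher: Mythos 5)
Your first step and your use of the Zariski-closure facts are both on the right track, but the bridge you build between them --- precomposing $f$ with a Nash-diffeomorphism $\phi$ from a bounded box onto an unbounded one --- fails, and it fails for exactly the reason the semibounded setting is delicate. The map $\phi$ (built from $\tau$) is a semialgebraic bijection between a bounded and an unbounded set, i.e.\ a pole; when $\cal R$ is a proper semibounded reduct, neither $\cal R$ nor (for all we know at this stage) $\WR$ defines it. Hence $g=f\circ\phi$ is not known to be definable in $\WR$, and the bounded case --- whose entire content is the reduction to \cite[Theorem 1.4]{es} applied to the induced structure $\WI$, which only makes sense for $\WR$-definable data --- cannot be applied to $g$. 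If such a definable compression were available, Theorem \ref{k4} would follow from \cite[Theorem 1.4]{es} in one line and no second step would be needed; this is precisely why the paper's Step I shrinks $\Gamma_f$ by multiplying by a small group element $r$ (which is $\cal R$-definable) rather than by applying $\tau$. A second, related slip: your ``bounded case'' assumes only that $X$ is bounded, but Lemma \ref{lem-Cinfty} and the $\WI$-definability of $\Gamma_f$ require the range to lie in $I$ as well, so what the cited lemmas actually give is the case where the whole graph $\Gamma_f$ is bounded (a smooth $f$ on a bounded open $X$ can be unbounded near the boundary of $X$).

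The paper's Step II repairs both issues at once and needs no finite cover of $X$ by boxes, which in general does not exist for an unbounded open semialgebraic cell such as $\{(x,y): x>0,\ 0<y<1/x\}$. After reducing to the case of an open cell via \cite{eep}, one fixes a single bounded box $B$, sets $Y=\overline{\Gamma_{f\res B}}^{zar}$ (irreducible of dimension $n$ by Facts \ref{Coste1} and \ref{Coste3}), and shows that $Z=\{x\in X:(x,f(x))\in Y\}$ equals $X$ by a connectedness argument: any bounded box $D$ with $\overline D\sub X$ meeting $Z$ in dimension $n$ has $\Gamma_{f\res D}$ bounded (continuity plus definable completeness), hence $f\res D$ is semialgebraic by Step I, and Fact \ref{Coste2} forces $\overline{\Gamma_{f\res D}}^{zar}=Y$, so $D\sub Z$. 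Only bounded boxes with closure inside $X$ are ever fed to Step I, so no pole is needed. Finally, your closing claim that a graph of dimension $n$ inside an $n$-dimensional algebraic set ``is semialgebraic'' is not automatic --- arbitrary subsets of algebraic sets need not be semialgebraic --- and this last step is exactly \cite[Lemma 3.10]{es}, which must be invoked explicitly.
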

\begin{proof}
We proceed in two steps:

\vskip.2cm
\noindent\textbf{Step I.} $\Gamma_f$ is short. We handle this case by reduction to the semialgebraic case, \cite[Theorem 1.4]{es}. First, we claim that we may assume that $\Gamma_f\sub I^{n+1}$. Indeed, after translating, we may assume that $\Gamma_f\sub J^{n+1}$, where $J$ is a short interval. Since any two short intervals are in $\cal L$-definable bijection, there is an $\cal L$-definable bijection that embeds $\Gamma_f$ into $I^{n+1}$.

 We may thus assume that $\Gamma_f\sub I^{n+1}$. In particular, $\Gamma_f$ is definable in $\WI$. Also, since $f\in \cal C^{\infty}(\cal R)$, by Lemma \ref{lem-Cinfty} we obtain $f\in \cal C^{\infty}(\cal I)$.  Now, by Lemma \ref{lem-dpdim}, $\WI$ has (DP) and (DIM), and hence, by  \cite[Theorem 1.4]{es}, $f$ is $\cal I$-definable. In particular, it is semialgebraic, as needed.\vskip.2cm

\noindent\textbf{Step II.} General case.  By \cite{eep}, every open semialgebraic set is a finite union of open cells. Hence we may assume $X$ is an open cell.  Let $\pi$ be the projection on the first $n$-coordinates,  $B$ a short open box that intersects $\Gamma_f$, and $B'\subseteq\pi(B\cap \Gamma_f)$ an open box. Denote $g=f_{\res B'}$. Clearly, $g$ is contained in a short set and  by Step I, $g$ is semialgebraic, and hence Nash. Therefore $\Gamma_g$ is a connected Nash-submanifold. By Fact \ref{Coste3}, the set $Y=\overline{\Gamma_g}^{zar}$ is irreducible, and by Fact \ref{Coste1} it has dimension $n$. \vskip.2cm

\noindent\textbf{Claim.} $\Gamma_f\sub Y$.
\begin{proof}[Proof of Claim]
Let
$$Z=\{x\in X : (x, f(x))\in Y\}.$$
It is enough to show $X\sub Z$.
Note that  $B'\sub Z$, and hence $\dim Z=n$. Assume towards a contradiction that $X\not\subseteq Z$. Since $X$ is connected and open, there is  $z\in \fr(Z)$ and an open short box $(z, f(z))\in D_1$, such that for $D:=\pi(D_1)$, we have
\begin{enumerate}
  \item  $\dim (D\cap Z)=n$, and
  \item $D\sm Z \ne\es$.
\end{enumerate}
Clearly, $\Gamma_{f_{\res D}}$ is short, and hence by Step I, we have that $f_{\res D}$ is semialgebraic. Since $\dim D=n$, the Zariski closure $Y'=\overline{\Gamma_{f_{\res D}}}^{zar}$ has dimension $n$ (Fact \ref{Coste1}). Moreover, the intersection $Y\cap Y'$ contains $\Gamma_{f_{\res D\cap Z}}$ and hence by (2) also has dimension $n$. By Fact \ref{Coste2}, $Y=Y'$. It follows that for every $d\in D$,
$$(d, f(d))\in \Gamma_{f_{\res D}}\sub Y'=Y.$$
This implies $D\sub Z$, which contradicts (3).
\end{proof}
Since $X$ is an $n$-cell, $\Gamma_f\sub Y$ and $\dim Y=n$, by \cite[Lemma 3.10]{es}, we obtain that $f$ is semialgebraic.
\end{proof}

\section{Staying semibounded}\label{sec-Rdef}

In this section, $\cal R=\la \R, <, +, \dots\ra$ denotes a semibounded o-minimal structure over the reals. Besides  reducts of the real field, examples include the expansion of $\la \R, <, +\ra$ by all restricted analytic functions, and others.

Our goal in this section is to prove Proposition \ref{sharp}. We show a slightly stronger version, Proposition \ref{no mult} below. We will need some machinery from \cite{FM}.
For $\WR=\la\cal R,P\ra$, we denote by $\WR^c$ the expansion of $\WR$ by sets for any subsets of $P^k$ for any $k\in \N$.

\begin{definition}[\cite{FM}]
We say that a set $Q\subseteq \R$ is \emph{sparse} if for every $\cal L$-definable function $f: \R^k\rightarrow \R$, $\dim f(Q^k)=0$.
\end{definition}
\begin{lemma}\label{sparse}
 If $\WR=\la\cal R,P\ra$ has \textup{(DIM)}, then $P$ is sparse.
\end{lemma}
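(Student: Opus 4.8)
The idea is to realise $f(P^k)$ as a union, indexed by the $0$-dimensional set $P^k$, of singletons, and then apply \textup{(DIM)}. Let $f\colon\R^k\to\R$ be an arbitrary $\cal R$-definable function. Its graph $\Gamma_f\subseteq R^k\times R$ is an $\cal R$-definable set, so it gives an $\cal R$-definable family $\{X_p\}_{p\in R^k}$ with $X_p=\{f(p)\}$; each $X_p$ is a singleton, hence $\dim X_p=0$. Since $f(P^k)=\bigcup_{p\in P^k}X_p$, applying \textup{(DIM)} with index set $S=P^k$ yields $\dim f(P^k)=\max_{p\in P^k}\dim X_p=0$. As $f$ was arbitrary, this is exactly the assertion that $P$ is sparse.

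For this to be legitimate I must check two things. First, that $\dim P^k=0$, so that \textup{(DIM)} is applicable to $S=P^k$. I would not want to invoke a general product formula $\dim(A\times B)=\dim A+\dim B$ here (we are only assuming definable completeness); instead I argue straight from the definition of dimension. Any projection of $P^k\subseteq R^k$ onto $j\ge 1$ of its coordinates equals the power $P^j$ (here $P\neq\es$, which is implicit in $\dim P=0$). If some such $P^j$ had non-empty interior in $R^j$, it would contain an open box $B_1\times\dots\times B_j$; then $B_1$ is a non-empty open interval contained in $\pi_1(P^j)=P$, contradicting $\dim P=0$. Hence no positive-dimensional projection of $P^k$ has interior, i.e.\ $\dim P^k=0$. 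Second, I should be using the reading of ``$\cal R$-definable family $\{X_t\}_{t\in S}$ with $\dim S=0$'' in which $S$ may be merely $\WR$-definable while $t\mapsto X_t$ is cut out by an $\cal R$-definable set — this is exactly how \textup{(DIM)} is used in Lemma~\ref{lem-dim2} and in clause~(I) of \textup{(DP)}, and it is precisely our situation, with $S=P^k$ and $p\mapsto\{f(p)\}$ coming from $\Gamma_f$.

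Granting these, the proof is essentially the one-line identity $f(P^k)=\bigcup_{p\in P^k}\{f(p)\}$ combined with $\dim P^k=0$, so there is no serious obstacle; the only subtlety worth flagging in the write-up is the verification that $\dim P^k=0$ (a power of a $0$-dimensional set is $0$-dimensional) and making explicit that \textup{(DIM)} is being applied with the $\WR$-definable index set $P^k$. One could alternatively try an induction on $k$, peeling off one coordinate at a time, but then the intermediate ``family'' $\{g_p(P^{k-1})\}_{p\in P}$ (where $g_p(x)=f(x,p)$) is no longer $\cal R$-definable — it mentions $P$ — so \textup{(DIM)} does not apply directly; the argument above avoids this.
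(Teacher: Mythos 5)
Your proof is correct and is essentially the paper's own argument: the paper likewise writes $f(P^k)=\bigcup_{t\in P^k}\{f(t)\}$ and applies \textup{(DIM)} with index set $P^k$ to conclude $\dim f(P^k)=0$. Your extra verification that $\dim P^k=0$ and your remark on the reading of the index set are sensible hygiene but not points of divergence.
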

\begin{proof}
By (DIM), $$\dim f(P^k) =\dim \bigcup_{t\in P^k}\{f(t)\}=\max_{t\in P^k}\dim \{f(t)\}=0,$$
as required.
\end{proof}


\begin{fact}[{\cite[Last claim in the proof of Theorem A]{FM}}]\label{FM} Assume $P\sub \R$ is sparse.
Let $A\subseteq \R^{n+1}$ be definable in $\WR^\#$  such that for every $x\in \R^n$, $A_x$ has no interior. Then there is an $\cal L$-definable function $f:\R^{m+n}\rightarrow \R$ such that for every $x\in \R^n$, $$A_x\subseteq \cl{f(P^m\times\{x\})}.$$
\end{fact}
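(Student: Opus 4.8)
This statement is extracted from the proof of Theorem~A of \cite{FM}, so the quickest option is to quote it; but if I were to reprove it I would follow \cite{FM} and base everything on a normal form for $\WR^\#$. Specifically, the plan is first to show --- this is the technical core, and the only place the sparseness of $P$ is genuinely used --- that every $\WR^\#$-definable set $B\sub\R^N$ is a finite union of sets of the form
\[
\bigl\{\, z\in\R^N \;:\; \exists\, w\in\R^q\ \bigl((z,w)\in G \ \wedge\ \bar h(z,w)\in T\bigr)\,\bigr\},
\]
where $G\sub\R^{N+q}$ and $\bar h\colon\R^{N+q}\to\R^k$ are $\cal R$-definable and $T\sub P^k$. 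Finite unions, intersections and further projections preserve this shape at once; the real work is complementation, which one handles by $\cal R$-cell decomposition together with the fact that $\WR^\#$ contains \emph{every} subset of $P^k$ (so $P^k\sm T$ is again available as a parameter set), sparseness of $P$ being what keeps the $\cal R$-definable data under control.

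Granting the normal form, write the given $A\sub\R^{n+1}$ as a finite union $\bigcup_j A_j$ of sets of the above shape; it suffices to treat each $A_j$ separately and merge afterwards. Fix one block $A':=A_j$, with data $G,\bar h,T$, and note that $A'_x\sub A_x$ has empty interior for every $x$. Set $G^\ast=\{(x,y,w,t):(x,y,w)\in G,\ \bar h(x,y,w)=t\}$ and let $\Pi$ be its projection to the $(x,y,t)$-coordinates; both are $\cal R$-definable, and $A'_x=\bigcup_{t\in T}\Pi_{(x,t)}$ for each $x$, where $\Pi_{(x,t)}=\{y:(x,y,t)\in\Pi\}$. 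Now $\cal R$-cell-decompose $\Pi$ in the variable order $(x,t,y)$, with $y$ last: for fixed $(x,t)$ the fiber $\Pi_{(x,t)}$ is a finite set of points --- the values $\eta_1(x,t),\dots,\eta_L(x,t)$ of $\cal R$-definable functions $\eta_1,\dots,\eta_L$ --- together with a finite union of open intervals. Thus $A'_x$ splits as a union of point-values over $t\in T$ and an ``interval part''; the latter is an open subset of $A'_x$, hence \emph{empty}, since $A'_x$ has empty interior. Therefore, using $T\sub P^k$,
\[
A'_x\ \sub\ \{\,\eta_\ell(x,t):t\in T,\ \ell\le L\,\}\ \sub\ \bigcup_{\ell=1}^{L}\eta_\ell\bigl(\{x\}\times P^k\bigr).
\]

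Finally, assume $P$ is infinite (if $P$ is finite then $A$ is already $\cal R$-definable over parameters listing $P$), fix distinct $p_1,\dots,p_L\in P$, and use a finite case split on one extra coordinate to package $\eta_1,\dots,\eta_L$ into a single $\cal R$-definable function whose admissible values of that coordinate lie in $\{p_1,\dots,p_L\}\sub P$ --- legitimate because the $p_\ell$ are allowed as parameters. The same trick merges the finitely many blocks $A_j$, yielding one $\cal R$-definable $f\colon\R^{m+n}\to\R$ with $A_x\sub f(P^m\times\{x\})\sub\overline{f(P^m\times\{x\})}$ for every $x$, as required. The one genuine obstacle is the normal form of the first paragraph: controlling complements of $P$-indexed unions is precisely the substance of \cite{FM} and depends essentially on sparseness; once it is in hand, the rest is routine $\cal R$-cell decomposition plus the remark that an open subset of an interior-free set is empty.
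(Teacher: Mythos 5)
The paper offers no proof of this Fact---it is imported verbatim as the last claim in the proof of Theorem A of \cite{FM}---and your proposal correctly identifies that quoting it is the intended route. Your reconstruction of the Friedman--Miller argument (the sparse normal form, then cell decomposition with $y$ last plus the observation that an open subset of an interior-free fiber is empty, then packaging the finitely many point-valued functions into one) is sound, with the only genuinely hard ingredient, the normal form, properly deferred to \cite{FM} exactly as the paper defers the whole statement.
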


Before proving our result, we first need a lemma.

\begin{lemma}\label{R dim}
If $\WR=\la\cal R,P\ra$ has \textup{(DIM)} then so does $\WR^\#$.
\end{lemma}
\begin{proof} Let $\{X_t:\, t\in A\}$ be an \cal L-definable family and let $S\subseteq A$ be a set of dimension $0$ definable in $\WR^\#$. For simplicity, we may assume that all the $X_t$'s have the same dimension $n$. By Lemma \ref{sparse}, $P$ is sparse and by Fact \ref{FM}, there is an \cal L-definable function $f:\R^k\to \R^n$ so that $S\subseteq \cl{f(P^k)}.$
We show that $\cl{f(P^k)}$ has dimension $0$. For contradiction, suppose that $\cl{f(P^k)}$ has interior. Without loss of generality, we may assume that $\cl{f(P^k)}$ is an open interval $I$.  Thus $f(P^k)$ is dense in $I$. Moreover, by (DIM), $f(P^k)$ has dimension $0$. Let $S_1=\{0\}\times f(P^k)$ and $S_2= I\sm f(P^k)\times \{0\}$. It is easy to see that $S_1\cup S_2$ has dimension $0$, and hence by (DIM) for $\WR$,
$$\dim\left(\bigcup_{x\in S_1\cup S_2}\{\pi_1(x),\pi_2(x)\}\right)=0.$$
But $\bigcup_{x\in S_1\cup S_2}\{\pi_1(x),\pi_2(x)\}= I,$
a contradiction. Hence $\dim(\cl{f(P^k)})=0$.
\par Since $\WR$ has (DIM), $$\dim\left(\bigcup_{t\in S}X_t\right)\leq\dim\left(\bigcup_{t\in \cl{f(P^k)}\cap A}X_t\right)=\max_t\dim(X_t)=n$$ and we have the result.
\end{proof}
\begin{rmk}\label{rmk i-min}
Note that the above proof shows that (DIM) implies that every definable subset of $\R$ has interior or is nowhere dense. The latter condition (also known as  ``i-minimality" by Fornasiero) is shown in Miller (see \cite{Mil1}) to imply that for every definable set $X$, $\dim(X)=\dim(\cl{X})$.
\end{rmk}

We are now ready to prove our result.
\begin{proposition}\label{no mult}
If $\WR=\la\cal R,P\ra$ has \textup{(DIM)},
then $\WR^\#$ is semibounded.
\end{proposition}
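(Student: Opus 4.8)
The plan is to argue by contradiction: suppose there is a pole definable in $\WR^\#$, i.e.\ a definable bijection $h\colon U\to V$ between a bounded set $U\subseteq\R^k$ and an unbounded set $V\subseteq\R$ (after projecting and restricting, one may arrange the target to be a subset of $\R$, or even an unbounded interval). By Lemma \ref{sparse}, $P$ is sparse, so Fact \ref{FM} is available for sets definable in $\WR^\#$. The strategy is to feed the graph of $h$ (or a suitable slice of it) into Fact \ref{FM}: since $U$ is bounded and $h$ is injective, the relevant fibers of the graph are small—in fact the graph $\Gamma_h\subseteq\R^{k+1}$, viewed as a subset of $\R^k\times\R$, has all vertical fibers singletons, hence with empty interior, so Fact \ref{FM} gives an $\cal R$-definable function $f\colon\R^{m+k}\to\R$ with $\Gamma_h{}_x\subseteq\cl{f(P^m\times\{x\})}$ for each $x\in U$. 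Composing, the unbounded set $V=h(U)$ is contained in the closure of the $\cal R$-definable image $f(P^m\times U)$, where $U$ is bounded.

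First I would make the reduction to this normal form precise: a pole definable in $\WR^\#$ yields, by the usual projection arguments, a definable injection from a bounded subset of some $\R^k$ onto an unbounded subset of $\R$; replacing $h$ by $h$ restricted to a definable piece if necessary, I get a bounded domain $D\subseteq\R^k$ and $h(D)$ unbounded. Then apply Fact \ref{FM} with $n=k$ and $A=\Gamma_h$ (each $A_x$ is a single point, so has no interior) to obtain the $\cal R$-definable $f\colon\R^{m+k}\to\R$ with $h(x)\in\cl{f(P^m\times\{x\})}$ for all $x\in D$. Hence $h(D)\subseteq\cl{\,f(P^m\times D)\,}$. Now $P^m\times D$ is a subset of $\R^{m+k}$; since $P$ is sparse and $D$ is bounded, I want to conclude that the $\cal R$-definable set $f(P^m\times D)$ cannot have an unbounded subset in its closure—or more precisely, that its closure is bounded, contradicting unboundedness of $h(D)$.

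The heart of the argument—and the step I expect to be the main obstacle—is showing that $\cl{f(P^m\times D)}$ is bounded. This is where semiboundedness of $\cal R$ over the reals is essential, via the structure theory of Section \ref{sec-prel}. By Fact \ref{str function} applied to $f$ (taken as a function of the last $k$ variables for fixed values of the first $m$, or more carefully by decomposing the domain), on each cell of its domain $f$ agrees with $\lambda(\bar y)+b$ for an affine $\lambda$ and $b$ ranging over a bounded set $B$; so $f(\bar p,x)\in\lambda(\bar p,x)+B$. Since $x$ ranges over the bounded set $D$ and $\bar p$ ranges over $P^m$, the only way $f(P^m\times D)$ can be unbounded is through the linear dependence of $\lambda$ on the $P$-coordinates; but sparseness of $P$ forces $\{\lambda(\bar p,0):\bar p\in P^m\}$ to be a $0$-dimensional, hence (being $\cal R$-definable and $0$-dimensional) finite, set—so it is bounded. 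Combining, $f(P^m\times D)$ lies in a bounded set, so its closure is bounded, and $h(D)\subseteq\cl{f(P^m\times D)}$ is bounded, contradicting that $h$ is a pole. I would need to be careful that Fact \ref{str function} is applied uniformly across the finitely many cells in a decomposition of $\dom f$ compatible with $\lambda$ and $B$, and that the affine functions $\lambda$, having been obtained $\cal R$-definably, have coefficients in $\R$ so that the sparseness-of-$P$ argument genuinely yields a finite (not merely $0$-dimensional) set of values; these are routine once the decomposition is set up, but they are the details where the proof could go wrong.
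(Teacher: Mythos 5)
There is a genuine gap at exactly the step you flag as the main obstacle: the claim that $\overline{f(P^m\times D)}$ is bounded. Sparseness (equivalently, the consequence of (DIM) in Lemma \ref{sparse}) only gives that $\lambda(P^m)$ has dimension $0$; it does not make it finite or bounded. Note that $\{\lambda(\bar p,0):\bar p\in P^m\}$ is \emph{not} $\cal R$-definable --- it is the image of $P^m$, which is only definable in $\WR^\#$ --- so you cannot invoke ``$\cal R$-definable and $0$-dimensional, hence finite.'' In every example the paper targets ($P=\Z$, $P=2^\Z$, $P$ an iteration sequence), $P$ itself is an unbounded set of dimension $0$, and $\lambda(P^m)$ is typically unbounded; hence $f(P^m\times D)\sub B+\lambda(P^m\times D)$ is unbounded and your argument produces no contradiction. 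Indeed, the mere containment of the unbounded set $h(D)$ in $\overline{f(P^m\times D)}$ is not by itself absurd: $B+\lambda(P^m)$ with $B$ an interval is an unbounded, even $1$-dimensional, set.

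The paper's proof supplies the missing idea: it converts unboundedness into a \emph{dimension} contradiction by a scaling/quotient device. From the pole it extracts an unbounded definable set $S$ of dimension $0$ and considers the whole family of dilates $\{xS : x\in\R\}$; Fact \ref{FM} is applied uniformly in the parameter $x$, and Fact \ref{str function} then shows $f(x,P^k)\sub B+\lambda P^k+b(x)$. Dividing by arbitrarily large $t\in S$ kills the bounded error $B+b(x)$, so every real number lies in the closure of the dimension-$0$ set $\{\lambda(y)/t : y\in P^k,\ t\in S\}$, contradicting (DIM) together with Lemma \ref{lem-dim2} (the closure of a definable set of dimension $0$ cannot have interior). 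To repair your argument you would need an analogous step: (DIM) controls dimension, not boundedness, so the pole must be used to turn the $0$-dimensional covering set into something dense in an interval before a contradiction can be drawn.
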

\begin{proof}
\par If $P$ is finite, then $\WR$ is semibounded and o-minimal and the result is clear.

\par Assume towards a contradiction that there is a field $I=\la\R,<,+_I,\cdot_I,0_I,1_I\ra$ definable in $\WR^\#$. Note that  $\cdot_I$ could be different from the standard multiplication. For simplicity, we assume that $0_I=0$ and  use the standard multiplication and division notations (the addition used in the proof being only the standard one).  Note that since the order of $I$ is the standard one, for every $x\in \R$, $\lim_{t\to \infty}x/t=0$.
\par  Since  there is a pole definable in $\WR^\#$, and $P$ is infinite, by taking  either $P$, if it is unbounded, or a homeomorphic image of $P$  via some bijection $f:conv(P)\to \R$, there is a definable  unbounded $0$-dimensional set $S$. We consider the family $\{xS\,:\; x\in \R\}$. By  Lemma \ref{sparse} and Fact \ref{FM}, there is an $\cal L$-definable function $f:\R^{k+1}\rightarrow \R$ such that
\begin{equation}\label{eq-xS}
  xS\subseteq \cl{f_x(P^k)}.
\end{equation} By Fact \ref{str function}, there is a bounded set $B$ and  linear functions $\lambda:\R^k\rightarrow \R$ and $b:\R\to \R$, such that
\begin{equation}
\label{eq-fxP} f(x, P^k)\subseteq B+\lambda P^k +b(x).
\end{equation}
We prove that
$$\R\sub \overline{\left(\frac{\lam P^k}{S}\right)},$$
which will contradict Lemma \ref{lem-dim2}. To see this, let $x\in \R$ and $\ve>0$. We show that there is $p\in P^k$ and $t\in S$, with
$$\left|x-\frac{\lam p}{t}\right|\le\ve.$$
Take $t\in S$ with $\frac{B+b(x)}{t}<\ve$. By (\ref{eq-xS}) and (\ref{eq-fxP}), there is $p\in P^k$, with $$x\in \overline{\left( \frac{\lam p +B+b(x)}{t}\right)} = \left[\frac{\lam p}{p}- \ve, \frac{\lam p}{p}+\ve\right],$$
as required.

By (\ref{eq-xS}) and (\ref{eq-fxP}), $$x\in Z=\cl{\{y/t\\,:\; y\in \lambda(P^k),\,t\in S\}}.$$
Therefore $Z$ has interior. Moreover, by Lemma \ref{R dim}, $\WR^\#$ has (DIM) and  hence $$\dim(Z)=\dim\big(\{y/t\,:\; y\in \lambda(P^k),t\in S\}\big)=\max_{(y,t)\in P^k\times S}\dim\big( \{\lambda(y)/t\}\big)=0.$$
This is a contradiction.
\end{proof}

\begin{question}
Is it true that if $\la \cal R, P\ra$ has  \textup{(DP)}, then so does $\la \cal R, P\ra^\#$?
\end{question}

 \begin{question}
 Is Theorem \ref{main2} true for $f$ definable in $\WR^\#$?
 \end{question}


\section{Examples}\label{sec-examples}

Throughout this section, $\cal R=\la \R, <, +, \dots\ra$ denotes an o-minimal semibounded structure over the reals. 
 Our goal is to prove Proposition  \ref{prop-examples}. For (1), our approach is the following. First, we show that  under a certain quantifier elimination result, (DP)(I) holds (Proposition \ref{DP1}). Together with d-minimality and the following lemma, we can then conclude  its proof. For (2), we reduce the statement to that of $\la \overline \R, \alpha^\Z\ra$ from \cite{es}, using Proposition \ref{sharp}.

  Recall (\cite{Mil1}) that $\WR$ is called \emph{d-minimal} if every definable subset of $R$ is the union of an open set and finitely many discrete sets,  uniformly in parameters.



\begin{lemma}\label{d-min}
Suppose $\WR=\la \cal R,P\ra $ is d-minimal and has \textup{(DP)(I)}. Then it has \textup{(DP)(II)} and \textup{(DIM)}.
\end{lemma}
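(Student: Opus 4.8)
The plan is to derive \textup{(DIM)} from the dimension theory available in d-minimal structures, and then to obtain \textup{(DP)(II)} by feeding \textup{(DP)(I)} and \textup{(DIM)} into a d-minimal cell decomposition.

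\emph{Establishing \textup{(DIM)}.} Let $\{X_t\}_{t\in S}$ be $\cal R$-definable with $\dim S=0$; put $X=\bigcup_{t\in S}X_t$ and $e=\max_{t\in S}\dim X_t$. Only $\dim X\le e$ requires an argument. I would look at the $\WR$-definable total space $E=\{(t,x): t\in S,\ x\in X_t\}$, the intersection of the $\cal R$-definable total space of the family with $S\times R^n$. Its projection to the $t$-coordinates lies in $S$ and so is $0$-dimensional, while every fibre of that projection is one of the $X_t$, of dimension $\le e$. Hence the fibre (additivity) inequality for dimension in d-minimal structures gives $\dim E\le e$, and since $X$ is the image of $E$ under the projection to the $x$-coordinates, which cannot raise dimension, $\dim X\le e$. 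Note that \textup{(DP)(I)} is not needed here.

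\emph{Establishing \textup{(DP)(II)}.} Let $X\subseteq R^n$ be definable and $d=\dim X$. By the cell decomposition for $\WR$-definable sets available in d-minimal structures \cite{Mil1}, taken to be stratifying, $X$ contains a $\WR$-definable subset $C$ which is relatively open in $X$ and, in its subspace topology, is homeomorphic to an open subset of $R^d$ — in particular a topological $d$-submanifold of $R^n$ without boundary (take $C$ to be a top-dimensional cell of such a decomposition of $X$). Now apply \textup{(DP)(I)} to the $\WR$-definable set $C$, and then \textup{(DIM)}: writing $C=\bigcup_{s\in S'}Z_s$ with $\{Z_s\}$ $\cal R$-definable cells and $\dim S'=0$, some $Z_{s_0}$ satisfies $\dim Z_{s_0}=\dim C=d$. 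As an $\cal R$-definable $d$-cell, $Z_{s_0}$ is likewise a topological $d$-manifold without boundary in its subspace topology, and $Z_{s_0}\subseteq C$; so by invariance of domain the inclusion $Z_{s_0}\hookrightarrow C$ is an open map, whence $Z_{s_0}$ is relatively open in $C$, hence in $X$. Therefore $Z_{s_0}$ is an $\cal R$-definable cell of dimension $\dim X$ that is relatively open in $X$, i.e.\ an $\cal R$-definable chunk of $X$.

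I expect the step producing the relatively open $\WR$-definable submanifold $C\subseteq X$ to be the main obstacle: it is exactly the place where d-minimality is indispensable, since \textup{(DP)(I)} and \textup{(DIM)} alone do not imply \textup{(DP)(II)} — for instance $\R\times\Q$ is a $0$-parameter union of $\cal R$-definable lines yet contains no $\cal R$-definable chunk (nor any relatively open submanifold) in the necessarily non-d-minimal structure that defines it. Everything else is routine o-minimal cell geometry, invariance of domain, and the dimension theory of d-minimal structures, so the write-up should consist mainly of quoting the appropriate cell decomposition and additivity statements from \cite{Mil1} and the later d-minimality literature, and checking that a top-dimensional cell of a stratifying decomposition of $X$ is indeed relatively open in $X$.
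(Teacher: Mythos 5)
The paper's own proof of this lemma is a two-line citation: (DP)(II) is \cite[Proposition 4.15]{es} and (DIM) is \cite[Remark 1.5(1)]{es}, together with the observation that the latter proof does not use the field assumption. Your attempt is therefore more self-contained than what the paper actually writes, and parts of it are sound: in particular, the invariance-of-domain device for upgrading a $\WR$-definable, relatively open, $d$-dimensional manifold piece $C\sub X$ to an $\cal R$-definable chunk, via (DP)(I) and (DIM), is correct and nice. For (DIM), however, the ``fibre additivity of dimension in d-minimal structures'' you invoke is itself a nontrivial package, developed in the literature for expansions of the real field rather than of ordered groups, so you are trading one citation for a heavier one. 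A more elementary route, presumably the one behind \cite[Remark 1.5(1)]{es}, is to note that d-minimality forces every definable $S\sub\R^m$ with $\dim S=0$ to be countable (each coordinate projection is a finite union of discrete subsets of $\R$), and then to apply Baire category: if some projection of $\bigcup_{t\in S}X_t$ to $e+1$ coordinates had interior, some $\overline{\pi(X_t)}$ would have interior, contradicting $\dim\overline{X_t}=\dim X_t\le e$, which holds because the fibres are $\cal R$-definable and hence o-minimal.

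The genuine gap is in (DP)(II), at exactly the step you yourself flag as the main obstacle and then dispatch in one sentence. There is no off-the-shelf ``stratifying cell decomposition'' for d-minimal structures in \cite{Mil1}: that paper treats expansions of the real field, whereas here $\WR$ need only expand an ordered group, and even over $\overline{\R}$ the decomposition results there do not directly yield a \emph{definable} piece that is simultaneously a topological $d$-manifold and relatively open in $X$ (the pieces of a decomposition need not avoid the closures of the other pieces, and controlling how the rest of $X$ accumulates on a candidate piece is precisely the difficulty). Producing such a $C$ is essentially the entire content of (DP)(II); once it exists, your invariance-of-domain argument finishes quickly, but as written the hard work has been pushed into an unsupported appeal to a decomposition theorem. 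This is exactly the content the paper outsources to \cite[Proposition 4.15]{es}, so your proof does not close the lemma: it reduces it to an unproved statement that carries most of the difficulty. (Two smaller points to address in any revision: you should check that the piece $C$ has dimension $d$ in the paper's sense of dimension, i.e.\ that some coordinate projection of $C$ to $d$ coordinates has interior, before applying (DIM) to its decomposition; and your $\R\times\Q$ example is a good illustration that (DP)(I) alone cannot suffice.)
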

\begin{proof}
By  \cite[Proposition 4.15]{es}, we have DP(II). By \cite[Remark 1.5(1)]{es}, we have (DIM). (In that reference $\WR$ expanded a field, but the proof of Remark 1.5(1) did not use that assumption.)
\end{proof}

\subsection{(DP)(I)}\label{sec-DPI}

In what follows, we assume that $P$ is discrete, closed in its convex hull, has no maximal element, and $0<P$. We define $\lambda:\R\rightarrow P\cup \{0\}$,
$$\lambda(x)=\begin{cases}
  \max(P\cap (-\infty,x]) & \text{if $x\in conv(P)$}\\
  0 & \text{otherwise}
\end{cases}
$$
(which exists since $P$ is a discrete set, closed in its convex hull). We define $s:P\rightarrow P$ to be the \emph{successor} function in $P$; namely, 
$$s(x)= \min\{y\in P: x<y\}.$$

By \emph{basic functions} we mean $\lambda$, $s$, $s^{-1}$ and all $\cal L$-definable functions.

\begin{proposition}\label{DP1} 
Assume that every definable  set $X\subseteq \R^l$ is a finite union of sets $Y$, each satisfying the following property:

$(A)$: there are definable functions $f_1,\ldots, f_n:\R^l\rightarrow \R$ and $g_1,\ldots g_m:\R^l\rightarrow \R$, which are given by compositional iterates of basic functions, such that
$$Y=\{x\in \R^l\,:\;\forall i, j,\,  f_i(x)=0,\; g_j(x)>0\}.$$

 Then $\la \cal R,P\ra$ has \textup{DP(I)}.
\end{proposition}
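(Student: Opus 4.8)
The plan is to reduce, using property $(A)$, to decomposing a single set $Y$ of the displayed form, and then to decompose the graph of each function appearing in that description as an $\cal R$-definable family indexed by a $0$-dimensional parameter set — crucially, so that each piece of the decomposition lies \emph{inside} the graph, which is what makes the final index set automatically $0$-dimensional. For the first reduction, if $X=Y^{(1)}\cup\dots\cup Y^{(k)}$ and each $Y^{(r)}=\bigcup_{t\in S^{(r)}}Y^{(r)}_t$ witnesses \textup{(DP)(I)}, then, padding the index tuples into a common ambient space and setting $S=\bigsqcup_r(\{r\}\times S^{(r)})$, $Y_{(r,t)}=Y^{(r)}_t$, one gets $X=\bigcup_{(r,t)\in S}Y_{(r,t)}$ with $\dim S=\max_r\dim S^{(r)}=0$. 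So we may assume $X=Y=\{x\in\R^l:\bigwedge_i f_i(x)=0,\ \bigwedge_j g_j(x)>0\}$ for $f_i,g_j$ given by compositional iterates of basic functions.

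\emph{Main Lemma.} For every function $h$ obtained by composing basic functions (regarded with its natural, possibly partial, domain $\sub\R^k$), one can write the graph $\Gamma_h=\bigcup_{t\in T}H_t$, where $\{H_t\}$ is an $\cal R$-definable family of subsets of $\R^{k+1}$, each $H_t\sub\Gamma_h$, and $T$ is a definable set contained in $(P\cup F)^m$ for some finite $F\sub\R$; since $P$ is discrete and closed in its convex hull it is nowhere dense, so $\dim T=0$. I would prove this by induction on the term building $h$ from basic functions. If $h$ is $\cal R$-definable, take $T$ a singleton and $H=\Gamma_h$. If $h=s$ or $s^{-1}$, then $\Gamma_h\sub P^2$ is already $0$-dimensional, so take $T=\Gamma_h$ with singleton fibres. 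If $h=\lambda$, then, using that $P$ is discrete, closed in its convex hull, and has no maximum, one checks $conv(P)=\bigcup_{p\in P}[p,s(p))$; hence $\Gamma_\lambda$ is the union of the (semilinear, hence $\cal R$-definable) pieces $[p,s(p))\times\{p\}$, indexed by $(p,s(p))\in\{(p,q):p\in P,\ q=s(p)\}\sub P^2$, together with the at most two pieces on which $\lambda\equiv 0$, indexed by a tuple recording the endpoints of $conv(P)$ — these endpoints are $\WR$-definable points and go into $F$. For the substitution step, if $h=g(h_1,\dots,h_r)$ with $\Gamma_{h_i}=\bigcup_{t_i\in T_i}H^i_{t_i}$ and $\Gamma_g=\bigcup_{u\in U}G_u$, write
$$\Gamma_h=\bigcup_{(t_1,\dots,t_r,u)\in T_1\times\dots\times T_r\times U}\Bigl\{(x,z):\exists y_1,\dots,y_r\ \bigwedge_i(x,y_i)\in H^i_{t_i}\ \wedge\ (y_1,\dots,y_r,z)\in G_u\Bigr\};$$
each fibre is $\cal R$-definable uniformly in the parameters, is contained in $\Gamma_h$ because the $H^i$ and $G$ lie in the respective graphs, and the new index set is again contained in a power of $P\cup(\text{finite})$.

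Granting the Main Lemma, I would finish \textup{(DP)(I)} as follows. Apply it to each $f_i$ and $g_j$, getting $\Gamma_{f_i}=\bigcup_{t\in T_i}H^{f_i}_t$ and $\Gamma_{g_j}=\bigcup_{u\in U_j}H^{g_j}_u$. Set $S=\prod_iT_i\times\prod_jU_j$, a definable set with $\dim S=0$ (it sits inside a power of $P\cup(\text{finite})$), and for $(t,u)\in S$ put
$$Z_{(t,u)}=\Bigl\{x\in\R^l:\textstyle\bigwedge_i(x,0)\in H^{f_i}_{t_i}\ \wedge\ \bigwedge_j\exists y>0\ (x,y)\in H^{g_j}_{u_j}\Bigr\},$$
which gives an $\cal R$-definable family. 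Since $H^{f_i}_{t_i}\sub\Gamma_{f_i}$ and $H^{g_j}_{u_j}\sub\Gamma_{g_j}$, any $x\in Z_{(t,u)}$ has $f_i(x)=0$ and $g_j(x)>0$ for all $i,j$, so $Z_{(t,u)}\sub Y$; conversely, for $x\in Y$ one reads off the correct waypoint values along the computations of the $f_i,g_j$ to obtain a parameter $(t,u)\in S$ with $x\in Z_{(t,u)}$. Hence $Y=\bigcup_{(t,u)\in S}Z_{(t,u)}$, and together with the first reduction this gives \textup{(DP)(I)}.

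The main obstacle is the Main Lemma, and within it the insistence that the pieces $H_t$ lie \emph{inside} $\Gamma_h$ rather than merely covering it once intersected with the ``correct'' parameters: it is exactly this that lets the final index set be taken as the full product $\prod_iT_i\times\prod_jU_j$, with no further definable constraint, so that $\dim S=0$ comes for free and one need not separately bound the dimension of the set of correct parameters. The remaining work is the routine but necessary bookkeeping that every index set produced stays inside a power of $P$ union a finite set — which is where the hypotheses that $P$ is discrete and closed in its convex hull enter, via nowhere density — together with the endpoint case analysis for $\lambda$ on $\R\sm conv(P)$.
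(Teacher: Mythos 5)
Your proposal is correct and follows essentially the same route as the paper: your Main Lemma is the paper's key Claim restated in graph form (the requirement $H_t\sub\Gamma_h$ playing the role of the paper's condition that whenever $f(x,y)$ is defined for some $y\in S$ it already equals $h(x)$), proved by the same induction on the term building $h$, with the same treatment of $\lambda$ via $\Gamma_s$ and the same product-of-index-sets assembly at the end (the paper reaches the product via a small lemma on intersecting such unions, which you fold into a single formula).
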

\begin{proof} We begin with a claim.\\

\noindent\textbf{Claim.} {\em
Let $h$ be a composition of basic functions. Then there is an $\cal L$-definable function $f$ and a definable set $S\subseteq P^k$,  such that for all $x\in \dom(h)$,}
\[\tag{*} h(x)=z \text{  if and only if  there is $y\in S$ such that $ f(x,y)=z$}.
\]
\begin{proof}[Proof of Claim]
By induction on the number of iterations of basic functions  which compose $h$. 

For $h=\lambda$, let $S=\Gamma_s\subseteq P^2$ and $f(x,y_1,y_2)=y_1$ if $y_1\leq x<y_2$, and not defined otherwise. We verify $(*)$.  If $\lambda(x)=z$ then $f(x,z,s(z))=z$. By definition of $f$, if $f(x,y_1,y_2)=y_1$, then $y_1\leq x<y_2$, and since $(y_1,y_2)\in \Gamma_s$, we have $y_2=s(y_1)$ and $\lambda(x)=y_1$.  Furthermore, we see that if there is $y \in S$ such that $f(x,y) $ is defined then $f(x,y)=h(x)$. The cases  $h=s,\, s^{-1}$ are similar and the case where $h$ is $\cal L$-definable is straightforward.

Now let $h=h_{n+1}(h_1,\ldots, h_k)$ where $h_{n+1}$ is a basic function and assume that  for $1\le j\leq k$, there are some $\cal L$-definable functions  $h'_j(x,y)$   and definable $S_j\subseteq P^{k_j}$ such that for all $x\in \dom(h)$, $$h_j(x)=z \text{ if and only if  there is $y\in S_j$ such that }h_j(x)=h'_j(x,y).$$
For $h_{n+1}=\lambda$ (thus $k=1$), we define $f$ exactly similarly to the last paragraph, namely $f(x,a_1,a_2,y)=a_1$ if $h'_1(x,y)$ is defined and  $a_1\leq h'_1(x,y)<a_2$, and not defined  otherwise. We verify $(*)$. If $h(x)=a_1$ then there are $(a_1,a_2)\in \Gamma_s, y_1\in S_1$ such that $a_1\leq h_1'(x,y_1)<a_2$ and $h_1'(x,y_1)=h_1(x)$. Thus $f(x,a_1,a_2,y_1)=a_1$.   If there is $y\in S_1$ such that $h_1'(x,y)$ is defined then $h_1'(x,y)=h_1(x)$ and if there are $(a_1,a_2)\in \Gamma_s$  such that $f(x,y,a_1,a_2)$ is defined (that is, $a_1\leq h'_1(x,y)<a_2$) then $ h(x)=a_1=f(x,y,a_1,a_2).$
\par  Again, the cases $h_{n+1}=s,s^{-1} $ are similar and the case $h_{n+1}$ $\cal L$-definable is straightforward.
\end{proof}

\smallskip Now let $X$ be a definable set. By hypothesis, there are $f_1,\ldots,f_k$ and $g_1,\ldots,g_{k'}$, which are compositional iterates of basic functions, such that $$X=\{x\in \R^l\,:\;\forall i, j,\,  f_i(x)=0,\; g_j(x)>0\}$$
Let $f'_i$,   $S_i$ the maps and sets of dimension $0$ given by the claim for $h=f_i$, and $g_j'$, $K_j$, for $h=g_j$. That is, for every $i,j$, we have that
$$f_i^{-1}(0)=\bigcup_{t\in S_i}f_i'(-,t)^{-1}(0),$$
$$g_j^{-1}(\R^{>0})=\bigcup_{t\in K_j}g_j'(-,t)^{-1}(\R^{>0}).$$
Note that $X$ has the form $$\bigcap_{s\leq m}\bigcup_{t\in S_s}Y_{s,t}$$where $m=k+k'$, for every $s\leq m$, $t\in S_s$, $ Y_{s,t}$ is an \cal L-definable set. 

To prove that $X$ has the form $\bigcup_{t\in S}X_t$ where $\{X_t\,:\; t\in S\}$ is a small subfamily of an \cal L-definable  family of sets, by an easy induction it is sufficient to prove that the intersection of two sets of the form $\bigcup_{t\in S'}Y_t$ where there  is an \cal L-definable family $\{Y_t\,:\; t\in A\}$
and $S'\subseteq A$ has dimension $0$ is itself a set of this form. Let $X_1=\bigcup_{t\in S_1} X_{1,t}$ and $X_2=\bigcup_{t\in S_2}X_{2,t}$ where there are two \cal L-definable families $\{X_{i,t}\,:\;t\in A_i\}$
 and $S_i\subseteq A_i$ being of dimension $0$. Then $$X_1\cap X_2 =\bigcup_{(t_1,t_2)\in S_1\times S_2}X_{1,t_1}\cap X_{2,t_2}$$
and the family $\{X_{1,t_1}\cap X_{2,t_2}\,:\; t_1\in S_1,\,t_2\in S_2\}$ is a small subfamily of the \cal L-definable family $Z=\{X_{1,t_1}\cap X_{2,t_2}\,:\; t_1\in A_1,\,t_2\in A_2\}$. Moreover, by cell decomposition in o-minimal structures, we may assume that $Z$ is a family of cells.
 This proves the result.
\end{proof}

 We are now ready to conclude the main result of this section.

 \begin{proof}[Proof of Proposition \ref{prop-examples}]
   For (1), we prove that it has (DP) and (DIM), and hence Theorem \ref{main2} directly applies. By Proposition \ref{d-min}, it suffices to show that it satisfies (DP)(I) and d-minimality. For (DP)(I), by Proposition \ref{DP1}, we only need to show  Condition (A). Both Condition (A)  and d-minimality are shown in \cite{MT}.

For (2), we cannot directly apply Theorem \ref{main2}, because we do not know if $\WR$ satisfies (DP)(I). However, we can derive the result as follows. Let $f$ be a smooth definable function with open \cal L-definable domain. Observe that $f$ is also definable in $\la \overline \R, \alpha^\Z\ra$. By \cite{es}, $f$ is semialgebraic. Also by \cite{es},  $\la \overline \R, \alpha^\Z\ra$ satisfies  (DIM). Since (DIM) is preserved under taking reducts, $\la \cal R, \alpha^\Z\ra$ also satisfies (DIM). Therefore, by Proposition \ref{no mult}, $\WR^\#$ is semibounded, and hence so is its reduct $\la \cal R, f\ra$.   But this reduct is o-minimal, and hence by \cite[Theorem 1.4]{pet-reals}, $f$ is definable in $\R_{sbd}$.
 \end{proof}

\subsection{Open questions}
 We finish with some natural questions and comments that arise from the current work.

\begin{question}
Do $\la \R_{sbd}, 2^\Z\ra$ and  $\la \R_{sbd}, 2^\Z,\cdot_{\upharpoonright (2^\Z)^2}\ra $ have \textup{(DP)}?
\end{question}

The current examples concern semibounded structures that expand $\R_{sbd}$. Suppose that $\cal R'$ is a structure that lies between $\la \R, <, +\ra$ and $\R_{sbd}$, such as $$\cal R'=\la \R, <, +, \cdot_{\res [0,1]^2}\ra.$$

\begin{question} For which $P$ does  $\la \cal R', P\ra$ satisfy the conclusions of Theorem \ref{main2} and Proposition \ref{prop-examples}? In particular, does $\la \R, <, +, \Z\ra$ do? (The last question was asked by Hieronymi.)
\end{question}

We note here that $\la \R_{sbd}, \Z\ra$, and even $\la \R, <, +, (x\mapsto \sqrt{2} x), \Z\ra$, are not d-minimal, as shown in \cite{Hiero two gps}.

\begin{question}
Let $\WR$ be  $ \la \R_{vec}, P\ra $ or $\la \R_{sbd}, P\ra$, where $P$ is an iteration sequence or $2^\Z$. 
Is the open core of  $\la \WR, \R^{alg}\ra$ equal to $\WR$? (extending Khani's relevant result for $\la\cl{\R},2^\Z,\R^{alg}\ra$ in \cite{khani}).
\end{question}

\end{document}